\newcolumntype{L}[1]{>{\raggedright\let\newline\\\arraybackslash\hspace{0pt}}m{#1}}
\newcolumntype{C}[1]{>{\centering\let\newline\\\arraybackslash\hspace{0pt}}m{#1}}
\newcolumntype{R}[1]{>{\raggedleft\let\newline\\\arraybackslash\hspace{0pt}}m{#1}}
\newtheorem{thm}{Theorem}[section]
\newtheorem{lem}[thm]{Lemma}
\newtheorem{rem}[thm]{Remark}
\newtheorem{fact}[thm]{Fact}
\theoremstyle{definition}
\newcommand{\N}{\mathbb{Z}_+}
\newcommand{\R}{\mathbb{R}}
\newcommand{\sgn}{\mathop{\mathrm{sgn}}}
\newcommand*{\defeq}{\mathrel{\rlap{%
                     \raisebox{0.3ex}{$\m@th\cdot$}}%
                     \raisebox{-0.3ex}{$\m@th\cdot$}}%
                     =}
\let\c@table\c@figure
\newcommand{\pic}[4]
{
	\begin{figure}[!htbp]
		\begin{center}
			\includegraphics[width=#2\textwidth]{#1}
			\begin{minipage}[c]{0.8\textwidth}
				\begin{center}
					\caption{#3}\label{#4}
				\end{center}
			\end{minipage}
		\end{center}
	\end{figure}
}
\numberwithin{equation}{section}
\title[Approximation of solutions of DDEs]{Approximation of  solutions of DDEs under nonstandard assumptions via Euler scheme}
\author[Natalia Czy\.z{}ewska]{Natalia Czy\.z{}ewska}
\author[Pawe\l \ M. Morkisz]{Pawe\l \ M. Morkisz}
\author[Pawe\l \ Przyby\l owicz]{Pawe\l \ Przyby\l owicz}
\address[Czy\.z{}ewska]{AGH University of Science and Technology, AGH Doctoral School, Faculty of Applied Mathematics, al. Mickiewicza 30, 30-059 Krak\'ow, Poland}
\email{nczyzew@agh.edu.pl, corresponding author}
\address[Morkisz]{AGH University of Science and Technology, Faculty of Applied Mathematics, al. Mickiewicza 30, 30-059 Krak\'ow, Poland}
\email{morkiszp@agh.edu.pl}
\address[Przyby\l owicz]{AGH University of Science and Technology, Faculty of Applied Mathematics, al. Mickiewicza 30, 30-059 Krak\'ow, Poland}
\email{pprzybyl@agh.edu.pl}
\begin{document}
\begin{abstract}
We deal with approximation of solutions of delay differential equations (DDEs) via the classical Euler algorithm. We investigate the pointwise error of the Euler scheme under nonstandard assumptions imposed on the right-hand side function $f$. Namely, we assume that $f$ is globally of at most linear growth, satisfies globally one-side Lipschitz condition but it is only locally H\"older continuous.  We provide a detailed error analysis of the Euler algorithm under such nonstandard regularity conditions.  Moreover, we report results of numerical experiments.
\newline\newline
Mathematics Subject Classification: 65L05, 65L70
\end{abstract}
\keywords{delay differential equations, one-side Lipschitz condition, locally H\"older continuous right-hand side function, Euler scheme}
\maketitle	
\section{Introduction}
In this paper we deal with the problem of approximation of solutions  $z: [0, +\infty) \to \R^d$ of delay differential equations (DDEs) of the following form
\begin{eqnarray}
\label{dde_gen}
	\left\{ \begin{array}{ll}
	z'(t)=f(t,z(t),z(t-\tau)), \quad &t\in [0,(n+1)\tau],\\
	z(t)=\eta, &t\in [-\tau,0],
	\end{array} \right.
\end{eqnarray}
with a constant time lag $\tau\in (0,+\infty)$. Here $\eta\in\R^d$, $n\in\N$ is a (finite and fixed) {\it horizon parameter} and a right-hand side function $f: [0,+\infty)\times\R^d\times\R^d\to\R^d$ that satisfies suitable regularity conditions. We investigate the error of the Euler scheme in the case when the right-hand side function is not necessarily globally Lipschitz continuous.

There are two main streams of common used methods for approximation of solutions of delay differential equations: connected with the theory of solving ordinary differential equations (ODE) and the theory of solving partial differential equations (PDE). The former group benefits e.g. from Runge-Kutta methods \cite[Chapter 9]{Balachandran} and \cite{Baker-RDE, Bellen, Torelli} (also special cases as Euler, midpoint or Heun methods), continuous Runge-Kutta methods \cite{Baker-Issues, Bellen}, linear multistep methods \cite{Baker-Issues, Hu} (also Adams–Bashforth methods) or predictor–corrector methods, cf. \cite{Bellen}. The latter takes advantages from reformulating DDE into PDE and for instance uses the infinitesimal generator approach \cite{Breda}, an optimal asymptotic homotopy method \cite{Baleanu} or method of lines \cite{Bellen-Maset, Bellen, Koto}. All of the above and classical literature for solving DDEs \cite{Diekmann, Driver, Smith} assumes mainly global Lipschitz regularity of the right-hand side function $f$ of the underlying DDE \eqref{dde_gen}, alike to classical literature for ordinary differential equations \cite{Butcher, Hairer, Driver}. 

Meanwhile, it turns out that real world applications need nonstandard assumptions. For example, a phase change of metallic materials can be described by a delay differential equation due to delay in the response to the change in processing conditions \cite{NC20, Estrin, Mecking, NC19, Pietrzyk, Szeliga}. A novel case with local Lipschitz condition and uniform boundedness of the right-hand side function was studied in \cite{Baker-RDE}.
In \cite{NC20, NC19} authors weakened this assumptions by considering a case when a right-hand side function is one-dimensional, locally H\"older continuous and monotone. In this paper we generalize the techniques used in \cite{NC20} and study the error of the classical Euler scheme for a multidimensional case, when a right-hand side function is locally H\"older continuous and satisfies one-side Lipschitz conditions. According to our best knowledge till now there were no results in the literature on error analysis for the Euler scheme under such nonstandard assumptions. Our paper can be viewed as a first step into that direction. Moreover, we believe that our approach can be adopted for other  numerical schemes, especially those of higher order.

The main contributions of the paper are as follows.
\begin{itemize}
    \item [(i)] We provide detailed and rigorous theoretical analysis of the error of the Euler scheme under nonstandard assumptions on the right-hand side function $f$. In particular, we show  dependence of the error of the Euler scheme on  H\"older exponents of the right-hand side function $f$ (see Theorem \ref{rate_of_conv_expl_Eul}).
    \item [(ii)] We perform many numerical experiments that confirm our theoretical conclusions.
\end{itemize}
The paper is organized as follows. The statement of the problem, basic notions and definitions are given in Section 2. In this section we also recall the definition of the Euler scheme. Auxiliary analytical properties of DDE \eqref{dde_gen} (such as existence, uniqueness of the solution) under nonstandard assumptions are settled in Section 3. Proof of the main result, that states the upper bound on the error of the Euler algorithm, is given in Section 4. Section 5 contains results of theoretical experiments performed for three exemplary DDEs. In Appendix (Section 6) we gathered and proved some auxiliary results concerning properties of solutions of ODEs in the case when the right-hand side function is of at most linear growth satisfies one-side Lipschitz condition, and is only locally H\"older continuous (Lemma~\ref{odes_exist_sol}). We also show the error estimate for the Euler scheme when applied to ODEs under such regularity conditions (Lemma~\ref{eul_odes}). These results are used in the proof of Theorem~\ref{rate_of_conv_expl_Eul}.
\section{Problem formulation}\label{error}
We denote $x\wedge y=\min\{x,y\}$, $x\vee y=\max\{x,y\}$, and $x_+=\max\{0,x\}$ for all $x,y\in\mathbb{R}$. For $x,y\in\mathbb{R}^d$ we take $\displaystyle{\langle x,y\rangle=\sum\limits_{k=1}^dx_ky_k}$ and  $\|x\|=\langle x,x\rangle^{1/2}$. For the right-hand side function $f: [0,+\infty)\times\R^d\times\R^d\to\R^d$ in the equation (\ref{dde_gen}) we impose the following assumptions:
\begin{itemize}
	\item [(F1)] $f\in C([0,+\infty)\times\R^d\times\R^d;\R^d)$.
	\item [(F2)] There exists a constant $K\in (0,+\infty)$ such that for all $(t,y,z)\in [0,+\infty)\times\R^d\times\R^d$
			\begin{displaymath}
				\| f(t,y,z)\|\leq K(1+\|y\|)(1+\|z\|).
			\end{displaymath}
	\item [(F3)] There exists a constant $H \in \R$ such that for all $(t,z)\in [0,+\infty)\times \R^d$, $y_1,y_2\in\R^d$
			\begin{displaymath}
				\langle y_1-y_2, f(t,y_1,z)-f(t,y_2,z)\rangle \leq H \cdot (1+\|z\|)\cdot \| y_1 - y_2 \|^2.
			\end{displaymath}
	\item [(F4)] There exist $L\in (0,+\infty)$, $\alpha,\beta_1, \beta_2,\gamma\in (0,1]$ such that for all  $t_1,t_2\in [0,+\infty)$, $y_1,y_2,z_1,z_2\in\R^d$
			\begin{eqnarray*}
				\|f(t_1,y_1,z_1)-f(t_2,y_2,z_2)\|&\leq& L\Bigl((1+\|y_1\|+\|y_2\|)\cdot(1+\|z_1\|+\|z_2\|)\cdot |t_1-t_2|^{\alpha}\notag\\
&&+(1+\|z_1\|+\|z_2\|)\|y_1-y_2\|^{\beta_1}\notag\\
&&+(1+\|z_1\|+\|z_2\|)\|y_1-y_2\|^{\beta_2}\notag\\
&&+(1+\|y_1\|+\|y_2\|)\|z_1-z_2\|^\gamma\Bigr).
			\end{eqnarray*}
\end{itemize}
The assumptions above, especially (F3),(F4), are inspired by the real-life model describing evolution of dislocation density, see \cite{NC20, Estrin, Mecking, NC19, Pietrzyk, Szeliga} for detailed description and discussion of the DDE involved. In Lemma \ref{prop_sol_z} below we prove that under the assumptions (F1)-(F3) the equation \eqref{dde_gen} has unique solution $z=z(t)$ on the whole interval $[-\tau,(n+1)\tau]$.

After \cite{Bellen} we recall the definition of the Euler scheme for DDEs of the form \eqref{dde_gen}. For the fixed horizon parameter $n\in\N$ the \textit{Euler scheme} that approximates a solution $z=z(t)$ of \eqref{dde_gen} for $t\in [0,(n+1)\tau]$ is defined recursively for subsequent intervals in the following way (\cite{Bellen}). This iterative way of solving DDEs is often called a method of steps (see \cite{Smith}). We fix the {\it discretization parameter} $N\in\N$ and set
\begin{displaymath}
	t_k^j=j\tau+kh, \quad k=0,1,\ldots,N, \ j=0,1,\ldots,n,
\end{displaymath} 
 where
\begin{equation}
	h=\frac{\tau}{N}.
\end{equation}
Note that for each $j$ the sequence $\{t^j_k\}_{k=0}^N$ provides uniform discretization of the subinterval $[j\tau,(j+1)\tau]$. Discrete approximation of $z$ in $[0,\tau]$ is defined by
\begin{eqnarray}
	y_0^0&=&\eta,\label{eq:43}\\
	y_{k+1}^0&=&y_k^0+h\cdot f(t_k^0,y_k^0,\eta), \quad k=0,1,\ldots, N-1.\label{eq:44}
\end{eqnarray}
Let us assume that the approximations $y_k^{j-1}\approx z(t_k^{j-1})$, $k=0,1,\ldots,N$, have already been defined in  $[(j-1)\tau,j\tau]$. (Notice that for $j=1$ it was done in \eqref{eq:43} and \eqref{eq:44}.) Then for $j=2,3,\ldots,n$ we take
\begin{eqnarray}
\label{expl_euler_1}
	y_0^j&=&y^{j-1}_N,\\
\label{expl_euler_11}	
	y_{k+1}^j&=&y_k^j+h\cdot f(t_k^j,y_k^j,y_k^{j-1}), \quad k=0,1,\ldots, N-1.
\end{eqnarray}
So as the output we obtain the sequence $\{y_k^j\}_{k=0,1,\ldots,N}, j=0,1,\ldots,n$ that provides a discrete approximation of the values $\{z(t_k^{j})\}_{k=0,1,\ldots,N}, j=0,1,\ldots,n$.

The aim is to investigate the error of Euler scheme under the (mild  and nonstandard) assumptions (F1)-(F4), i.e.: upper bound on the following quantity
\begin{equation}
\label{err}
    \max\limits_{0\leq j\leq n}\max\limits_{0\leq k\leq N}\|z(t_k^{j})-y_k^j\|.
\end{equation}
Unless otherwise stated, all constants appearing in the estimates will only depend on $\tau$, $\eta$, $n$, $d$, $K,H,L,\alpha,\beta_1,\beta_2$, and $\gamma$, but not on the discretization parameter $N$. Moreover, we use the same symbol to denote different constants.

\begin{rem}
For a time-dependent delay, i.e. $\tau = \tau(t)$ or a state-dependent delay, i.e. $\tau = \tau(t, z(t))$ one can notice that it is not possible to use methods of steps. When we deal with a variable delay it is highly unlikely to hit the approximation mesh $\{ t_k^j \}_{0\leq k \leq N}^{0\leq j \leq n}$. Because of that, it is required to use an ODE solver with additional dense output (called continuous extension, cf. \cite[Chapter 9]{Balachandran}). However, the direct imitation of the method of steps with a vanishing lag case (a DDE is said to have a vanishing lag at the point $t_*$ if $\tau(t, z(t)) \to 0$ as $t \to t_*$) encounters a natural barrier beyond which the solution cannot proceed \cite{Baker-Issues}.
\end{rem}

\section{Analytical properties of solutions of DDEs}

In the following lemma we show, by using results from the Appendix, that the delay differential equation \eqref{dde_gen} has a unique solution under the assumptions (F1)-(F3). Note that the assumptions are weaker than those known from the standard literature. Namely, we use only one-side Lipschitz assumption and local H\"older condition for the right-hand side function $f$ instead of a global Lipschitz continuity.

Let us introduce following notation. By $\phi_j=\phi_j(t)$ we denote the solution $z$ of \eqref{dde_gen} on the interval $t \in [j\tau,(j+1)\tau]$ for $j \in \N \cup \{0\}$. Let $\phi_{-1}(t):=\eta$ for all $t\in [-\tau,0]$.

\begin{lem} 
\label{prop_sol_z}
Let $\eta\in\mathbb{R}^d$ and let $f$ satisfy (F1)-(F3). Moreover, fix $\tau\in (0,+\infty)$ and $n~\in~\N~\cup~\{0\}$.  Then the equation \eqref{dde_gen} has a unique solution
\begin{equation}
    z\in C^1([0,(n+1)\tau];\mathbb{R}^d).
\end{equation}
Moreover, then there exist $K_0,K_1,\ldots,K_n\geq 0$ such that for $j=0,1,\ldots,n$ 
	\begin{equation}
		\sup\limits_{j\tau\leq t\leq (j+1)\tau}\|\phi_j(t)\|\leq K_j,
	\end{equation}
	and, for all $t,s\in [j\tau,(j+1)\tau]$ 
	\begin{equation}
		\|\phi_j(t)-\phi_j(s)\|\leq \bar K_j |t-s|,
	\end{equation}
	with $\bar K_j=K(1+K_{j-1})(1+K_{j}),$ where $K_{-1}:=\|\eta\|$. 
\end{lem}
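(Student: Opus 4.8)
The plan is to construct $z$ by the \emph{method of steps}, reducing \eqref{dde_gen} on each subinterval to an ordinary differential equation and then invoking Lemma~\ref{odes_exist_sol} from the Appendix. The whole argument is an induction on $j\in\{0,1,\dots,n\}$: on the interval $[j\tau,(j+1)\tau]$ the delayed argument $z(t-\tau)$ has already been determined on the previous step, so it acts as a known continuous inhomogeneity and the DDE collapses to an ODE. Concretely, for $j=0$ and $t\in[0,\tau]$ one has $t-\tau\in[-\tau,0]$, whence $z(t-\tau)=\eta$ and \eqref{dde_gen} becomes $y'(t)=g_0(t,y(t))$, $y(0)=\eta$, with $g_0(t,y):=f(t,y,\eta)$. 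Assuming $\phi_{j-1}\in C^1([(j-1)\tau,j\tau];\R^d)$ has already been built, for $t\in[j\tau,(j+1)\tau]$ one has $t-\tau\in[(j-1)\tau,j\tau]$, so $z(t-\tau)=\phi_{j-1}(t-\tau)$ is a known function and \eqref{dde_gen} reduces to $y'(t)=g_j(t,y(t))$, $y(j\tau)=\phi_{j-1}(j\tau)$, with $g_j(t,y):=f(t,y,\phi_{j-1}(t-\tau))$.

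First I would verify that each $g_j$ satisfies the hypotheses of Lemma~\ref{odes_exist_sol}. Continuity is immediate from (F1) together with continuity of $\phi_{j-1}$. Using the inductive bound $\sup\|\phi_{j-1}\|\le K_{j-1}$ (with $K_{-1}:=\|\eta\|$), assumption (F2) gives the at most linear growth $\|g_j(t,y)\|\le K(1+K_{j-1})(1+\|y\|)$, and assumption (F3) gives the one-side Lipschitz estimate $\langle y_1-y_2,g_j(t,y_1)-g_j(t,y_2)\rangle\le H(1+K_{j-1})\|y_1-y_2\|^2$ when $H\ge 0$, the case $H<0$ only sharpening dissipativity. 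Lemma~\ref{odes_exist_sol} then yields a unique $\phi_j\in C^1([j\tau,(j+1)\tau];\R^d)$ together with a finite sup-bound $K_j:=\sup_{j\tau\le t\le (j+1)\tau}\|\phi_j(t)\|$, closing the induction.

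It remains to splice the $\phi_j$ into a single $z\in C^1([0,(n+1)\tau];\R^d)$, and here lies what I expect to be the main obstacle: the matching of one-sided derivatives at the interior nodes $t=j\tau$, $j=1,\dots,n$. Continuity of the glued function is built in through the initial conditions $\phi_j(j\tau)=\phi_{j-1}(j\tau)$. For the derivatives, the left value at $j\tau$ equals $f(j\tau,\phi_{j-1}(j\tau),z((j-1)\tau))$ and the right value equals $f(j\tau,\phi_j(j\tau),z((j-1)\tau))$; these coincide precisely because $z$ is already continuous at $(j-1)\tau$ (so the delayed argument is unambiguous) and $\phi_j(j\tau)=\phi_{j-1}(j\tau)$. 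Continuity of $f$ then upgrades the piecewise construction to class $C^1$.

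Finally, for the Lipschitz estimate I would integrate the equation: for $t,s\in[j\tau,(j+1)\tau]$ one writes $\phi_j(t)-\phi_j(s)=\int_s^t f(u,\phi_j(u),\phi_{j-1}(u-\tau))\,du$ and bounds the integrand via (F2) and the established sup-bounds by $K(1+\|\phi_j(u)\|)(1+\|\phi_{j-1}(u-\tau)\|)\le K(1+K_j)(1+K_{j-1})$, obtaining $\|\phi_j(t)-\phi_j(s)\|\le \bar K_j|t-s|$ with $\bar K_j=K(1+K_{j-1})(1+K_j)$, as claimed. Uniqueness of $z$ on the whole interval follows from the uniqueness provided by Lemma~\ref{odes_exist_sol} on each step, since the above reduction is forced at every stage.
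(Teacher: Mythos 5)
Your proposal is correct and follows essentially the same route as the paper's proof: induction via the method of steps, verification of the hypotheses of Lemma~\ref{odes_exist_sol} for $g_j(t,y)=f(t,y,\phi_{j-1}(t-\tau))$ using the inductive bound $K_{j-1}$ (with $H_+$ handling the sign of $H$), application of that lemma on each subinterval, and matching of one-sided derivatives at the interior nodes to conclude $C^1$ regularity. The only cosmetic difference is that you derive the Lipschitz constant $\bar K_j=K(1+K_{j-1})(1+K_j)$ by integrating the equation directly, whereas the paper reads it off from the estimate \eqref{lip_sol_z} in Lemma~\ref{odes_exist_sol} --- which is the same computation.
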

\begin{proof} We proceed by induction with respect to $j$.

For $j=0$, the equation \eqref{dde_gen} can be written as
\begin{equation}
\label{dde_gen0}
	z'(t)=f(t,z(t),\eta), \quad t\in [0,\tau],
\end{equation}
with the initial condition $z(0)=\eta$. Denoting by 
\begin{equation}
	g_0(t,y):=f(t,y,\eta), \quad t\in [0,\tau], y\in\R^d,
\end{equation}
we get, by the properties of $f$ namely (F1) and (F2), that $g_0\in C([0,\tau]\times\R^d)$, 
\begin{equation}
	\|g_0(t,y)\|\leq\hat K_0(1+\|y\|),
\end{equation}
with $\hat K_0=K(1+\|\eta\|)$, and by (F3) for all $y_1,y_2\in\R^d$ and $t\in [0,\tau]$
\begin{equation}
    \langle y_1-y_2, g_0(t,y_1)-g_0(t,y_2)\rangle \leq H(1+\|\eta\|)\cdot\| y_1 - y_2 \|^2\leq  \hat H_0\cdot\| y_1 - y_2 \|^2,
\end{equation}
where $\hat H_0=H_+ (1+\|\eta\|)$. Therefore, by Lemma \ref{odes_exist_sol} 
we get that there exists a unique continuously differentiable solution $\phi_0:[0,\tau]\to\R^d$ of the equation \eqref{dde_gen0}, such that
\begin{displaymath}
	\sup\limits_{t\in [0,\tau]}\|\phi_0(t)\|\leq K_0,
\end{displaymath}
where 
$$
K_0=(\|\eta\|+\hat K_0 \tau)e^{\hat K_0 \tau}=(\|\eta\|+K(1+\|\eta\|)\tau)e^{K(1+\|\eta\|) \tau}\geq 0,
$$ 
and for all $t,s\in [0,\tau]$
\begin{displaymath}
	\|\phi_0(t)-\phi_0(s)\|\leq \bar K_0 |t-s|,
\end{displaymath}
where 
$$
\bar K_0=\hat K_0(1+K_0)=K(1+K_{-1})(1+K_0)
$$
and we set $K_{-1}:=\|\eta\|$. In that way it depends only on values of $\|\eta\|,K,\tau$.

Let us now assume that there exists $0\leq j\leq n-1$ such that the statement of the lemma holds for the solution $\phi_j:[j\tau,(j+1)\tau]\to\R^d$. Consider the equation
\begin{equation}
\label{dde_1np1}
	z'(t)=f(t,z(t),\phi_j(t-\tau)), \quad t\in [(j+1)\tau,(j+2)\tau],
\end{equation}
with the initial condition $z((j+1)\tau)=\phi_j((j+1)\tau)$. Let
\begin{equation}
	g_{j+1}(t,y):=f(t,y,\phi_j(t-\tau)), \quad t\in [(j+1)\tau,(j+2)\tau], \ y\in\R^d.
\end{equation}
We get by the induction assumption and from the properties of $f$ that \linebreak $g_{j+1}\in C([(j+1)\tau,(j+2)\tau]\times\R^d;\R^d)$, for all $y\in\R^d$ we have
\begin{equation}
	\|g_{j+1}(t,y)\|\leq K\Bigl(1+\sup\limits_{j\tau\leq t\leq (j+1)\tau}\|\phi_j(t)\|\Bigr)(1+\|y\|)\leq \hat K_{j+1}(1+\|y\|),
\end{equation}
with $\hat K_{j+1}=K(1+K_j)$, and for all $t\in [(j+1)\tau,(j+2)\tau]$, $y_1,y_2\in\R^d$
\begin{eqnarray}
    &&\langle y_1-y_2, g_{j+1}(t,y_1)-g_{j+1}(t,y_2)\rangle \leq H (1+\|\phi_j(t-\tau)\|) \cdot \| y_1 - y_2 \|^2\notag\\
    &&\leq H_+ (1+\|\phi_j(t-\tau)\|) \cdot \| y_1 - y_2 \|^2\leq H_+ \Bigl(1+\sup\limits_{j\tau\leq t\leq (j+1)\tau}\|\phi_j(t)\|\Bigr) \cdot \| y_1 - y_2 \|^2\notag\\
    &&\leq \hat H_{j+1}\cdot \| y_1 - y_2 \|^2,
\end{eqnarray}
where $\hat H_{j+1}=H_+(1+K_j)$. Hence, by Lemma \ref{odes_exist_sol} we get that there exists a unique continuously differentiable solution $\phi_{j+1}:[(j+1)\tau,(j+2)\tau]\to\R^d$ of the equation \eqref{dde_1np1}, such that
\begin{displaymath}
	\sup\limits_{t\in [(j+1)\tau,(j+2)\tau]}\|\phi_{j+1}(t)\|\leq K_{j+1},
\end{displaymath}
where 
$$
K_{j+1}=(K_j+\hat K_{j+1} \tau)e^{\hat K_{j+1} \tau}=(K_j+K(1+K_j)\tau)e^{K(1+K_j)\tau}\geq 0,
$$ 
and for all $t,s\in [(j+1)\tau,(j+2)\tau]$ we have
\begin{displaymath}
	\|\phi_{j+1}(t)-\phi_{j+1}(s)\|\leq \bar K_{j+1} |t-s|,
\end{displaymath}
where $\bar K_{j+1}=\hat K_{j+1}(1+K_{j+1})=K(1+K_j)(1+K_{j+1})$. 

From the above inductive construction we see that the solution of \eqref{dde_gen} is continuous. Moreover, due to the continuity of $f$, $\phi_j$, and $\phi_{j-1}$ we get for any $0\leq j\leq n-1$ that
\begin{eqnarray}
&&\lim\limits_{t\to (j+1)\tau-}z'(t)=\lim\limits_{t\to (j+1)\tau-} \phi'_{j}(t)=\lim\limits_{t\to (j+1)\tau-}f(t,\phi_j(t),\phi_{j-1}(t-\tau))\notag\\
&&=f((j+1)\tau,\phi_j((j+1)\tau),\phi_{j-1}(j\tau))=f((j+1)\tau,\phi_{j+1}((j+1)\tau),\phi_{j}(j\tau))\notag\\
&=&\lim\limits_{t\to (j+1)\tau+}f(t,\phi_{j+1}(t),\phi_{j}(t-\tau))=\lim\limits_{t\to (j+1)\tau+}\phi'_{j+1}(t)=\lim\limits_{t\to (j+1)\tau+}z'(t).\notag
\end{eqnarray}
Hence, the solution of \eqref{dde_gen} is continuously differentiable and the proof is completed.
\end{proof}
\begin{lem} 
\label{props_gn}
Let $\eta\in\mathbb{R}^d$ and let $f$ satisfy (F1)-(F4). Additionally, fix $\tau\in (0,+\infty)$ and $n\in\N \cup \{0\}$. For $j=0,1,\ldots,n$ consider the functions $g_j:[j\tau,(j+1)\tau]\times\R^d\to\R^d$ defined by
\begin{equation}
	g_j(t,y)=f(t,y,\phi_{j-1}(t-\tau)),
\end{equation}
where $\phi_{-1}(t):=\eta$ for $t\in [-\tau,0]$. Then the following holds:
\begin{itemize}
	\item [(i)] $g_j\in C([j\tau,(j+1)\tau]\times\R^d; \R^d)$, $j=0,1,\ldots,n$.
	\item [(ii)] There exist $\hat K_0,\hat K_1\ldots,\hat K_n\in[0,+\infty)$ such that for all $j=0,1,\ldots,n$, \linebreak $(t,y)\in [j\tau,(j+1)\tau]\times\R^d$
		\begin{displaymath}
			\|g_j(t,y)\|\leq\hat K_{j}(1+\|y\|).
		\end{displaymath}
	\item [(iii)]  There exist $\hat H_0,\hat H_1\ldots,\hat H_n\in[0,+\infty)$ such that for all $j=0,1,\ldots,n$, \linebreak $t\in [j\tau,(j+1)\tau], y_1,y_2\in\R^d$
	\begin{displaymath}
		\langle y_1-y_2, g_j(t,y_1)-g_j(t,y_2)\rangle \leq \hat H_j\cdot \| y_1 - y_2 \|^2.
	\end{displaymath}
	\item [(iv)] There exist $\hat L_0,\hat L_1\ldots,\hat L_n\in[0,+\infty)$ such that for all $j=0,1,\ldots,n$, \linebreak $t_1,t_2\in [j\tau,(j+1)\tau], y_1,y_2\in\R^d$
	\begin{displaymath}
		\|g_j(t_1,y_1)-g_j(t_2,y_2)\|\leq \hat L_j \Bigl((1+\|y_1\|+\|y_2\|)\cdot |t_1-t_2|^{\alpha\wedge\gamma}+\|y_1-y_2\|^{\beta_1}+\|y_1-y_2\|^{\beta_2}\Bigr).
	\end{displaymath}
\end{itemize}
\end{lem}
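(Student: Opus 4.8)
The plan is to fix $j\in\{0,1,\ldots,n\}$ and read off all four properties directly from the corresponding assumptions (F1)--(F4) on $f$, substituting the frozen delay argument $z=\phi_{j-1}(t-\tau)$ and controlling it by the two bounds supplied by Lemma~\ref{prop_sol_z}: for $t\in[j\tau,(j+1)\tau]$ one has $t-\tau\in[(j-1)\tau,j\tau]$, hence $\|\phi_{j-1}(t-\tau)\|\leq K_{j-1}$ and $\|\phi_{j-1}(t_1-\tau)-\phi_{j-1}(t_2-\tau)\|\leq \bar K_{j-1}|t_1-t_2|$ (for $j=0$ the delay argument is the constant $\eta$, so $K_{-1}=\|\eta\|$ and the associated Lipschitz constant is $0$).

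Parts (i)--(iii) are then immediate. For (i), $g_j$ is the composition of the continuous map $(t,y)\mapsto(t,y,\phi_{j-1}(t-\tau))$ with $f$, which is continuous by (F1) and the continuity of $\phi_{j-1}$ established in Lemma~\ref{prop_sol_z}. For (ii), (F2) together with $\|\phi_{j-1}(t-\tau)\|\leq K_{j-1}$ yields $\|g_j(t,y)\|\leq K(1+\|y\|)(1+K_{j-1})$, so one may take $\hat K_j=K(1+K_{j-1})$. For (iii), since the two arguments $g_j(t,y_1)$ and $g_j(t,y_2)$ carry the \emph{same} delay value, (F3) applies verbatim and gives $\langle y_1-y_2,\,g_j(t,y_1)-g_j(t,y_2)\rangle\leq H(1+K_{j-1})\|y_1-y_2\|^2$; bounding $H\leq H_+$ lets one set $\hat H_j=H_+(1+K_{j-1})\geq 0$.

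The substance of the lemma is (iv). Here I would apply (F4) with $z_1=\phi_{j-1}(t_1-\tau)$ and $z_2=\phi_{j-1}(t_2-\tau)$. The factors $1+\|z_1\|+\|z_2\|$ are bounded by $1+2K_{j-1}$, which absorbs a constant into $\hat L_j$ for the three terms carrying $|t_1-t_2|^\alpha$, $\|y_1-y_2\|^{\beta_1}$, and $\|y_1-y_2\|^{\beta_2}$. The fourth term of (F4), namely $(1+\|y_1\|+\|y_2\|)\|z_1-z_2\|^\gamma$, is where the $t$-dependence of the delay enters: using the Lipschitz bound $\|z_1-z_2\|\leq\bar K_{j-1}|t_1-t_2|$ and the monotonicity of $x\mapsto x^\gamma$ turns it into $\bar K_{j-1}^{\gamma}(1+\|y_1\|+\|y_2\|)|t_1-t_2|^\gamma$.

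The only genuine obstacle is that after this substitution the time increment appears with two distinct exponents, $\alpha$ (from the regularity of $f$ in $t$) and $\gamma$ (inherited from the delay), whereas the claimed bound carries the single exponent $\alpha\wedge\gamma$. To reconcile them I would use that $t_1,t_2$ lie in an interval of length $\tau$, so $|t_1-t_2|\leq\tau$, and hence for any exponent $p\geq\alpha\wedge\gamma$ one has $|t_1-t_2|^{p}\leq\tau^{\,p-(\alpha\wedge\gamma)}\,|t_1-t_2|^{\alpha\wedge\gamma}$. Applying this with $p=\alpha$ and $p=\gamma$ collapses both time contributions onto $|t_1-t_2|^{\alpha\wedge\gamma}$ at the cost of a fixed multiplicative constant depending on $\tau,\alpha,\gamma$. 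Collecting the resulting coefficients (from the first and fourth terms of (F4), together with the two $\|y_1-y_2\|^{\beta_i}$ terms) into a single $\hat L_j=\hat L_j(L,K_{j-1},\bar K_{j-1},\tau,\alpha,\gamma)$ gives exactly the asserted inequality, completing the proof.
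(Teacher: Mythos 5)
Your proof is correct and follows essentially the same route as the paper: parts (i)--(iii) are read off from the bounds of Lemma~\ref{prop_sol_z} with the same constants $\hat K_j=K(1+K_{j-1})$ and $\hat H_j=H_+(1+K_{j-1})$, and part (iv) applies (F4) with $z_i=\phi_{j-1}(t_i-\tau)$, the Lipschitz bound $\|z_1-z_2\|\leq \bar K_{j-1}|t_1-t_2|$ (with $\bar K_{-1}=0$ for $j=0$), and then collapses the exponents $\alpha$ and $\gamma$ onto $\alpha\wedge\gamma$ via $|t_1-t_2|\leq\tau$. Your explicit factor $\tau^{p-(\alpha\wedge\gamma)}$ is just a slightly different bookkeeping of the same step that the paper absorbs into $\hat L_j = L\max\{1+2K_{j-1},\bar K_{j-1}^{\gamma}\}\cdot 2(1+\tau)$.
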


\begin{proof} Conditions (i), (ii), and (iii) follow from the proof of Lemma \ref{prop_sol_z}. By the assumption (F4) and Lemma \ref{prop_sol_z} we get for all $t_1,t_2\in [j\tau,(j+1)\tau]$, $y_1,y_2\in\R^d$ that
\begin{eqnarray*}		
	&&\|g_j(t_1,y_1)-g_j(t_2,y_2)\|=\|f(t_1,y_1,\phi_{j-1}(t_1-\tau))-f(t_2,y_2,\phi_{j-1}(t_2-\tau))\|\\
	&&\leq L\Bigl( (1+2K_{j-1}) (1+\|y_1\|+\|y_2\|) |t_1-t_2|^{\alpha}\notag\\
	&&+(1+2K_{j-1})\Bigl(\|y_1-y_2\|^{\beta_1}+\|y_1-y_2\|^{\beta_2}\Bigr)\\
	&&+\bar K_{j-1}^{\gamma}\Bigl(1+\|y_1\|+\|y_2\|\Bigr)|t_1-t_2|^{\gamma}\biggr)\\
	&&\leq \hat L_j\Bigl( (1+\|y_1\|+\|y_2\|) \cdot |t_1-t_2|^{\alpha\wedge\gamma} + \|y_1-y_2\|^{\beta_1}+\|y_1-y_2\|^{\beta_2}\Bigr),
	\end{eqnarray*}
	where $\hat L_j=L\max\{1+2K_{j-1},\bar K_{j-1}^\gamma\}\cdot 2(1+\tau)$, $\hat L_0=L(1+2\|\eta\|)\cdot 2 (1+\tau)$, and \linebreak $\bar K_{-1}:=0, K_{-1}:=\|\eta\|$.
\end{proof}

\section{Error of the Euler scheme for delay differential equations}
In this section we provide a proof of the main result that consists of the upper bound on the error \eqref{err} for the Euler algorithm. In the proof we shall use the following lemma.
\begin{lem}
    \label{euler_bound_lem}
    Let $\tau\in (0,+\infty)$, $\eta\in\mathbb{R}^d$ and let $f$ satisfy (F1)-(F4). For any $n\in\N\cup\{0\}$ there exist $\tilde K_0,\ldots\tilde K_n\in (0,+\infty)$, $\tilde K_j=\tilde{K}_j(K,\eta,\tau)$, such that for all $N\in\N$
    \begin{equation}
        \max\limits_{0\leq j\leq n}\max\limits_{0\leq k\leq N}\|y_k^j\|\leq\max\limits_{0\leq j\leq n}\tilde K_j.
    \end{equation}
\end{lem}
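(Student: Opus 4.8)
The plan is to prove this by induction on the block index $j$, establishing a uniform-in-$N$ bound on the Euler iterates $\{y_k^j\}_{k=0}^N$ within each block $[j\tau,(j+1)\tau]$, mirroring the structure of the continuous estimate in Lemma~\ref{prop_sol_z}. The key tool will be the discrete analogue of the linear-growth a priori bound: in each block the Euler recursion $y_{k+1}^j = y_k^j + h\, g_j(t_k^j, y_k^j)$ uses the frozen right-hand side $g_j$, which by Lemma~\ref{props_gn}(ii) satisfies the linear growth bound $\|g_j(t,y)\|\leq \hat K_j(1+\|y\|)$. I would invoke (or reprove in a single line) the standard discrete Gr\"onwall estimate for the explicit Euler scheme applied to $y' = g_j(t,y)$: if the right-hand side has linear growth with constant $\hat K_j$, then the iterates starting from $y_0^j$ satisfy $\max_{0\leq k\leq N}\|y_k^j\| \leq (\|y_0^j\| + \hat K_j\tau)\, e^{\hat K_j\tau}$, uniformly in $N$. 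This is precisely the discrete counterpart of the bound $K_j$ appearing in Lemma~\ref{prop_sol_z}, and indeed should be obtainable from Lemma~\ref{eul_odes} in the Appendix.

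The induction then proceeds as follows. First I would handle the base case $j=0$: here $y_0^0 = \eta$ and $g_0(t,y) = f(t,y,\eta)$ has linear-growth constant $\hat K_0 = K(1+\|\eta\|)$, so the discrete Gr\"onwall step yields $\max_{0\leq k\leq N}\|y_k^0\| \leq (\|\eta\| + \hat K_0\tau)e^{\hat K_0\tau} =: \tilde K_0$, independent of $N$. For the inductive step, I would assume a uniform bound $\max_{0\leq k\leq N}\|y_k^{j-1}\|\leq \tilde K_{j-1}$ has been established for block $j-1$. The continuation rule \eqref{expl_euler_1} gives $y_0^j = y_N^{j-1}$, so the starting value for block $j$ is itself bounded by $\tilde K_{j-1}$. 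Crucially, the Euler scheme for block $j$ uses $f(t_k^j, y_k^j, y_k^{j-1})$, and by (F2) the delayed argument contributes a factor $(1+\|y_k^{j-1}\|)\leq (1+\tilde K_{j-1})$, giving the effective linear-growth constant $\hat K_j = K(1+\tilde K_{j-1})$. Applying the discrete Gr\"onwall bound again then produces
\begin{equation}
    \max\limits_{0\leq k\leq N}\|y_k^j\|\leq (\tilde K_{j-1} + \hat K_j\tau)e^{\hat K_j\tau} =: \tilde K_j,
\end{equation}
which is again independent of $N$ and depends only on $K,\eta,\tau$ (through $\tilde K_{j-1}$). Taking the maximum over $0\leq j\leq n$ finishes the argument.

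The point requiring the most care — though it is not deep — is recognizing that in the Euler recursion the delayed value $y_k^{j-1}$ is already a \emph{computed} iterate from the previous block rather than a value of the continuous solution $\phi_{j-1}$. Hence, unlike in Lemma~\ref{props_gn}, one cannot simply bound the delayed argument by $K_{j-1}$; one must instead bound it by the inductively obtained \emph{discrete} bound $\tilde K_{j-1}$. This is why the constants $\tilde K_j$ are defined by their own recursion (structurally identical to that for $K_j$ but built from the discrete bounds), and why the hypothesis must be maintained uniformly in $N$ throughout the induction. Everything else reduces to the single linear-growth Gr\"onwall estimate, which is uniform in $N$ precisely because the factor $(1+\hat K_j h)^N \leq e^{\hat K_j h N} = e^{\hat K_j\tau}$ is controlled independently of the step count.
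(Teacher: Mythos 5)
Your proposal is correct and follows essentially the same route as the paper: induction on the block index $j$, the base case handled by the ODE Euler bound (Lemma~\ref{eul_odes} via $g_0$), and the inductive step using (F2) with the \emph{discrete} delayed iterates $y_k^{j-1}$ bounded by $\tilde K_{j-1}$, followed by the discrete Gr\"onwall lemma with effective constant $K(1+\tilde K_{j-1})$ — exactly the paper's $\tilde C_{j+1}=K(1+\tilde K_j)$ and recursion $\tilde K_{j+1}=e^{\tau\tilde C_{j+1}}(\tilde K_j+1)-1$, up to the precise algebraic form of the constants. Note only that your opening description of the recursion as $y_{k+1}^j=y_k^j+h\,g_j(t_k^j,y_k^j)$ is inaccurate for $j\geq 1$ (the scheme uses $f(t_k^j,y_k^j,y_k^{j-1})$, not the continuous $\phi_{j-1}$ frozen into $g_j$), but you explicitly identify and repair this yourself in your final paragraph, so the argument as executed is sound.
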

\begin{proof}
We proceed by induction. Note that
\begin{equation}
    y^0_{k+1}=y^0_k+h\cdot g_0(t^0_k,y^0_k), \quad k=0,1,\ldots,N-1,
\end{equation}
where $y^0_0=\eta$ and $g_0(t,y)=f(t,y,\eta)$, so  by Lemmas \ref{props_gn},  \ref{eul_odes}  we get that
\begin{equation}
    \max\limits_{0\leq k\leq N}\|y_k^0\|\leq \tilde K_0,
\end{equation}
where $\tilde K_0=\tilde K_0(K,\eta,\tau)$. Now, let us assume that there exist $j=0,1,\ldots,n-1$ and \linebreak $\tilde K_j=\tilde K_j(K,\eta,\tau)\in (0,+\infty)$ such that for all $N\in\N$
\begin{equation}
    \max\limits_{0\leq k\leq N}\|y_k^j\|\leq \tilde{K}_j,
\end{equation}
which is obviously satisfied for $j=0$. By \eqref{expl_euler_11} and (F2) assumption we get for \linebreak $k=0,1,\ldots,N-1$ that
\begin{equation}
    \|y^{j+1}_{k+1}\|\leq\|y^{j+1}_k\|+h\|f(t_k^{j+1},y_k^{j+1},y_k^j)\|\leq (1+h\tilde C_{j+1})\|y^{j+1}_k\|+h\tilde C_{j+1},
\end{equation}
where $\|y_0^{j+1}\|=\|y_N^j\|\leq\tilde K_j$ and $\tilde C_{j+1}=K(1+\tilde K_j)$. From the discrete version of Gronwall's lemma we obtain
\begin{equation}
\label{gronwall_k_tilde}
   \max\limits_{0\leq k\leq N} \|y_k^{j+1}\|\leq\tilde K_{j+1},
\end{equation}
with $\displaystyle{\tilde K_{j+1}=e^{\tau\tilde C_{j+1}}(\tilde K_j+1)-1}$ and $\tilde K_{j+1}=\tilde K_{j+1}(K,\eta,\tau)$. This completes the proof.
\end{proof}
Below we stated and prove  the main result of the paper.
\begin{thm} 
\label{rate_of_conv_expl_Eul} 
Let $\tau\in (0,+\infty)$, $\eta\in\R^d$ and let $f$ satisfy (F1)-(F4). For any $n\in\N\cup\{0\}$ there exist $C_0,C_1,\ldots,C_n\geq 0$ such that for  $N\geq 2\lceil \tau\rceil$ the following holds
	\begin{equation}
	\label{thm_eq1}
		\max\limits_{0\leq k\leq N}\|\phi_0(t_k^0)-y_k^0\|\leq C_0 (h^{\alpha\wedge\gamma}+h^{\beta_1}+h^{\beta_2}),
	\end{equation}
	and for $j=1,2,\ldots,n$
	\begin{equation}
		\max\limits_{0\leq k\leq N}\|\phi_j(t_k^j)-y_k^j\|\leq C_j\sum\limits_{l=1}^j \Bigl(h^{\frac{1}{2}\gamma^{l-1}}+h^{\gamma^l \cdot (\alpha\wedge\gamma)}+h^{\beta_1\cdot\gamma^l}+h^{\beta_2\cdot\gamma^l}\Bigr),
	\end{equation}
	where $\phi_j=\phi_j(t)$ is the solution of \eqref{dde_gen} on the interval $[j\tau,(j+1)\tau]$. In particular, if $\gamma=1$ then for $j=1,2,\ldots,n$
	\begin{equation}
		\max\limits_{0\leq k\leq N}\|\phi_j(t_k^j)-y_k^j\|\leq jC_j (h^{1/2}+h^{\alpha}+h^{\beta_1}+h^{\beta_2}) . 
	\end{equation}
\end{thm}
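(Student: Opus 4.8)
The plan is to argue by induction on the step index $j$, following the method of steps that defines the scheme itself. Write $E_j:=\max_{0\le k\le N}\|\phi_j(t_k^j)-y_k^j\|$. The base case $j=0$ is exactly the Euler scheme for the ODE $z'=g_0(t,z)$ with $g_0(t,y)=f(t,y,\eta)$ started from the \emph{exact} value $\eta$; since Lemma \ref{props_gn} guarantees that $g_0$ is continuous, of linear growth, one-side Lipschitz with constant $\hat H_0$, and H\"older continuous with time-exponent $\alpha\wedge\gamma$ and space-exponents $\beta_1,\beta_2$, the ODE error estimate Lemma \ref{eul_odes} applies verbatim and gives \eqref{thm_eq1}. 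For the inductive step I would isolate the two mechanisms that generate error on $[j\tau,(j+1)\tau]$: the local discretization error of the scheme, and the propagation of the already-committed error from the previous interval through the delayed argument $y_k^{j-1}$.

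To separate these cleanly I would introduce an auxiliary scheme $\tilde y_k^j$, namely the Euler scheme for the ODE $z'=g_j(t,z)=f(t,z,\phi_{j-1}(t-\tau))$ driven by the \emph{exact} history $\phi_{j-1}$ and started from the exact value $\tilde y_0^j=\phi_j(j\tau)$. Then I split $\phi_j(t_k^j)-y_k^j=\bigl(\phi_j(t_k^j)-\tilde y_k^j\bigr)+\bigl(\tilde y_k^j-y_k^j\bigr)$. The first difference is a genuine ODE-Euler error for $g_j$, so Lemma \ref{eul_odes} together with the regularity of $g_j$ from Lemma \ref{props_gn}(iv) bounds it by $C(h^{\alpha\wedge\gamma}+h^{\beta_1}+h^{\beta_2})$, uniformly in $j$. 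The second difference, $\epsilon_k^j:=\tilde y_k^j-y_k^j$, is where the delay error enters.

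The heart of the argument is the energy estimate for $\epsilon_k^j$. Writing $\epsilon_{k+1}^j=\epsilon_k^j+h\Delta_k$ with $\Delta_k=f(t_k^j,\tilde y_k^j,\phi_{j-1}(t_k^{j-1}))-f(t_k^j,y_k^j,y_k^{j-1})$, I would expand $\|\epsilon_{k+1}^j\|^2=\|\epsilon_k^j\|^2+2h\langle\epsilon_k^j,\Delta_k\rangle+h^2\|\Delta_k\|^2$ and split $\Delta_k$ along its two arguments. The principal-variable difference is controlled \emph{dissipatively} by the one-side Lipschitz bound of Lemma \ref{props_gn}(iii), contributing $2h\hat H_j\|\epsilon_k^j\|^2$; the delay-variable difference is controlled by the $\gamma$-H\"older term of (F4), contributing a forcing $\le C\|\epsilon_k^j\|\,E_{j-1}^\gamma$ since $\|\phi_{j-1}(t_k^{j-1})-y_k^{j-1}\|\le E_{j-1}$. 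The quadratic term $h^2\|\Delta_k\|^2$ I would bound by $Ch^2$ using the uniform boundedness of $\{y_k^j\}$ and $\{\tilde y_k^j\}$ (Lemma \ref{euler_bound_lem}) and of $\phi_{j-1}$ (Lemma \ref{prop_sol_z}) together with (F2). Summing over $k$, invoking the discrete Gronwall lemma, and then solving the resulting quadratic inequality for $\max_k\|\epsilon_k^j\|$ — using the initial transfer $\|\epsilon_0^j\|=\|\phi_j(j\tau)-y_N^{j-1}\|=\|\phi_{j-1}(t_N^{j-1})-y_N^{j-1}\|\le E_{j-1}$ — yields $\max_k\|\epsilon_k^j\|\le C(E_{j-1}^\gamma+h^{1/2})$, the $h^{1/2}$ being exactly the square root of the accumulated $Ch^2$ increments.

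Combining the two pieces by the triangle inequality produces the scalar recursion $E_j\le C\bigl(E_{j-1}^\gamma+h^{1/2}+h^{\alpha\wedge\gamma}+h^{\beta_1}+h^{\beta_2}\bigr)$, with base $E_0\le C(h^{\alpha\wedge\gamma}+h^{\beta_1}+h^{\beta_2})$. Iterating and using subadditivity $(a+b)^\gamma\le a^\gamma+b^\gamma$ for $\gamma\in(0,1]$, a fresh contribution created at interval $j-l$ is raised to the power $\gamma^l$ by the time it reaches level $j$; thus $h^{1/2}$ becomes $h^{\gamma^l/2}$ and $h^{\alpha\wedge\gamma},h^{\beta_i}$ become $h^{\gamma^l(\alpha\wedge\gamma)},h^{\gamma^l\beta_i}$, which after discarding terms dominated for $h\le1$ (guaranteed by $N\ge 2\lceil\tau\rceil$) gives precisely the displayed sum $\sum_{l=1}^{j}\bigl(h^{\frac12\gamma^{l-1}}+h^{\gamma^l(\alpha\wedge\gamma)}+h^{\beta_1\gamma^l}+h^{\beta_2\gamma^l}\bigr)$. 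Setting $\gamma=1$ collapses every $\gamma^l$ to $1$ and yields the stated $jC_j(h^{1/2}+h^{\alpha}+h^{\beta_1}+h^{\beta_2})$. I expect the energy estimate of the propagation step to be the main obstacle: one must blend the dissipative one-side Lipschitz control of the principal variable with the merely H\"older dependence on the delayed variable, keep all arguments uniformly bounded to dominate the quadratic increment, and track the exponents faithfully through the discrete Gronwall inequality; the subsequent unfolding of the nonlinear recursion in $E_j$ is then routine but must be carried out carefully to land on the exact powers $\gamma^l$.
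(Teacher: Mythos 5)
Your proposal is correct and follows essentially the same route as the paper: induction over the delay intervals, an auxiliary Euler scheme $\tilde y_k^j$ driven by the exact history $\phi_{j-1}$, an energy estimate combining the one-side Lipschitz dissipation (F3) in the principal variable with the $\gamma$-H\"older delay term of (F4), discrete Gronwall, and the same $\gamma^l$-exponent bookkeeping via subadditivity. The only (harmless) deviation is bookkeeping: you start the auxiliary scheme at the exact value $\phi_j(j\tau)$ and carry the inherited error $E_{j-1}$ through the energy estimate via $\epsilon_0^j\neq 0$, whereas the paper starts it at $y_N^{j-1}$ (so $e_0^{j+1}=0$) and routes the inherited error through the perturbation parameter $\Delta$ of Lemma~\ref{eul_odes}.
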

\begin{proof}
For $t\in [0,\tau]$ we approximate the solution $z$ of (\ref{dde_gen}) by the  Euler method 
\begin{eqnarray}
	y_0^0&=&\eta,\\
	y_{k+1}^0&=&y_k^0+h\cdot g_0(t_k^0,y_k^0), \quad k=0,1,\ldots, N-1,
\end{eqnarray}
where $g_0(t,y)=f(t,y,\eta)$. Applying Lemmas \ref{eul_odes}, \ref{props_gn} to $\xi:=\eta$, $g:=g_0$, $[a,b]:=[0,\tau]$, $\Delta:=0$ we get that
\begin{equation}
\label{est_euler_0tcr}
		\max\limits_{0\leq k\leq N}\|\phi_0(t_k^0)-y_k^0\|\leq \tilde C_2 (1+\|\eta\|) (h^{\alpha \wedge \gamma}+h^{\beta_1}+h^{\beta_2}).
\end{equation}
Therefore \eqref{thm_eq1} is proved.

Starting from the interval $[\tau,2\tau]$ we proceed by induction. Namely, for $j=1$ and $t\in [\tau,2\tau]$ the DDE \eqref{dde_gen} reduces to the following ODE
\begin{equation}
\label{dde_23}
	z'(t)=g_1(t,z(t)), \quad t\in [\tau,2\tau],
\end{equation}
with the initial value $z(\tau)=\phi_0(\tau)=\phi_0(t^0_N)$ and $g_1(t,y)=f(t,y,\phi_0(t-\tau))$. We approximate the solution $z$ of \eqref{dde_23} by the auxiliary Euler scheme 
\begin{eqnarray}
\label{aux_expl_euler_10}
	\tilde y_0^1&=&y^{1}_0=y_N^0,\\
\label{aux_expl_euler_1k}	
	\tilde y_{k+1}^1&=&\tilde y_k^1+h\cdot g_1(t_k^1,\tilde y_k^1), \quad k=0,1,\ldots, N-1,
\end{eqnarray}
and from \eqref{est_euler_0tcr} we get
\begin{equation}
	\|z(\tau)-\tilde y_0^1\|=\|\phi_1(t_0^1)-\tilde y_0^1\|=\|\phi_0(t_N^0)-y_N^0\|\leq \tilde C_2 (1+\|\eta\|) (h^{\alpha \wedge \gamma}+h^{\beta_1}+h^{\beta_2}).
\end{equation}
Applying Lemmas \ref{props_gn}, \ref{prop_sol_z} and \ref{eul_odes} for $\xi:=\phi_0(t^0_N)$, $g:=g_1$, $[a,b]:=[\tau,2\tau]$, \linebreak $\Delta:=\tilde C_2 (1+\|\eta\|) (h^{\alpha \wedge \gamma}+h^{\beta_1}+h^{\beta_2})$ we get that
\begin{equation}
	\|\phi_0(t^0_N)\|\leq K_0,
\end{equation}
and 
\begin{equation}
\label{est_phi1_tyk1}
		\max\limits_{0\leq k\leq N}\|\phi_1(t_k^1)-\tilde y_k^1\|\leq  C_0  (h^{\alpha \wedge \gamma}+h^{\beta_1}+h^{\beta_2}).
\end{equation}
Therefore, we have for $k=0,1,\ldots,N$
\begin{equation}
\label{est_phi1y1k}
        \|\phi_1(t_k^1)-y_k^1\| \leq \|\phi_1(t_k^1)-\tilde y_k^1\| + \|\tilde y_k^1 - y_k^1\|\leq C_0  (h^{\alpha \wedge \gamma}+h^{\beta_1}+h^{\beta_2})+\|\tilde y_k^1 - y_k^1\|
\end{equation}
and we need to estimate $\|\tilde y_k^1 - y_k^1\|$. Let us denote by
\begin{equation}
\label{local_err_1}
	e_k^1:=\tilde y_k^1-y_k^1, \quad k=0,1,\ldots,N,
\end{equation}
where, by \eqref{aux_expl_euler_10},  $e_0^1=\tilde y_0^1-y_0^1=0$. From \eqref{aux_expl_euler_1k} and  \eqref{expl_euler_11} we have for $k~=~0,1,\ldots,N-1$ that
\begin{equation}
\label{rek_ek1}
	e_{k+1}^1=e_k^1+h\mathcal{R}^1_k+h\mathcal{L}^1_k,
\end{equation}
where
\begin{equation}
\label{def_rk1}
	\mathcal{R}^1_k=f(t_k^1,\tilde y_k^1,\phi_0(t_k^0))-f(t_k^1,y_k^1,\phi_0(t_k^0)), 
\end{equation}
and
\begin{equation}
\label{def_lk1}
	\mathcal{L}^1_k=f(t_k^1,y_k^1,\phi_0(t_k^0))-f(t_k^1,y_k^1,y_k^0).
\end{equation}
From \eqref{rek_ek1} we obtain that
\begin{equation}
\label{err_norm_eq}
	\|e_{k+1}^1-h\mathcal{L}^1_k\|^2=\|e_k^1+h\mathcal{R}^1_k\|^2,
\end{equation}
where
\begin{displaymath}
	\|e_{k+1}^1-h\mathcal{L}_k^1\|^2 = \|e_{k+1}^1\|^2 - 2h\langle e_{k+1}^1, \mathcal{L}_k^1 \rangle + h^2\|\mathcal{L}_k^1\|^2
\end{displaymath}
\begin{equation}
\label{norm_square_right1}
	\|e_k^1+h\mathcal{R}_k^1\|^2 = \|e_k^1\|^2 + 2h\langle e_k^1, \mathcal{R}_k^1 \rangle + h^2\|\mathcal{R}_k^1\|^2.
\end{equation}
Since $h^2\|\mathcal{L}_k^1\|^2 \geq 0$, we get
\begin{equation}
\label{norm_square_left}
	\|e_{k+1}^1-h\mathcal{L}_k^1\|^2 \geq \|e_{k+1}^1\|^2 - 2h\langle e_{k+1}^1, \mathcal{L}_k^1 \rangle.
\end{equation}
while from the assumption (F3) and from Lemma \ref{prop_sol_z} we get
\begin{eqnarray*}
    &&\langle e_k^1, \mathcal{R}_k^1 \rangle = \left\langle \tilde y_k^1-y_k^1, \ f\left(t_k^1,\tilde y_k^1,\phi_0(t_k^0)\right)-f\left(t_k^1,y_k^1,\phi_0(t_k^0)\right) \right\rangle \\
    &&\leq H \left(1 + \|\phi_0(t_k^0)\|\right) \|e_k^1\|^2 \leq H_+ \left(1 + \|\phi_0(t_k^0)\|\right) \|e_k^1\|^2 \\
    &&\leq H_+ \left(1 + \sup_{0\leq t \leq \tau} \|\phi_0(t)\|\right) \|e_k^1\|^2\leq H_+ (1 + K_0) \|e_k^1\|^2.
\end{eqnarray*}
 The fact above together with \eqref{norm_square_right1} implies
\begin{equation}
\label{norm_square_right2}
	\|e_k^1+h\mathcal{R}_k^1\|^2 \leq \|e_k^1\|^2 + 2hH_+ (1 + K_0) \|e_k^1\|^2 + h^2\|\mathcal{R}_k^1\|^2.
\end{equation}
Hence, by using \eqref{err_norm_eq}, \eqref{norm_square_left}, and \eqref{norm_square_right2} we obtain
\begin{equation}
\label{err_next_step}
	\|e_{k+1}^1\|^2 \leq \left( 1 + 2hH_+ (1 + K_0)\right) \|e_k^1\|^2 + h^2\|\mathcal{R}_k^1\|^2 + 2h\langle e_{k+1}^1, \mathcal{L}_k^1 \rangle.
\end{equation}
Moreover, by the Cauchy-Schwarz inequality
\begin{equation}
\label{err_dot_product}
    \langle e_{k+1}^1, \mathcal{L}_k^1 \rangle \leq |\langle e_{k+1}^1, \mathcal{L}_k^1 \rangle| \leq \frac{1}{2} (\|e_{k+1}^1\|^2 + \|\mathcal{L}_k^1\|^2).
\end{equation}
Hence, combining \eqref{err_next_step} with \eqref{err_dot_product} we have
\begin{equation}
\label{}
	\|e_{k+1}^1\|^2 \leq \left( 1 + \hat C_1 h\right) \|e_k^1\|^2 + h^2\|\mathcal{R}_k^1\|^2 + h\|e_{k+1}^1\|^2 + h\|\mathcal{L}_k^1\|^2
\end{equation}
where $\hat C_1:=2H_+ (1 + K_0)$. For $N \geq 2 \lceil \tau \rceil$ we have that $h\in (0,1/2)$, and by the Fact~\ref{fakt_h} we obtain 
\begin{equation}
\label{err_square}
	 \|e_{k+1}^1\|^2 \leq (1+2h)(1+\hat C_1 h) \|e_k^1\|^2 + 2h^2\|\mathcal{R}_k^1\|^2 + 2h\|\mathcal{L}_k^1\|^2
\end{equation}
for $k=0,1,\dots, N-1$. Recall the well-known facts that for all $\varrho \in (0,1]$ and $x,y\geq 0$ it holds
\begin{displaymath}
 x^\varrho \leq 1 + x.
\end{displaymath}
and
\begin{equation}
\label{inequality_rho}
    (x+y)^\varrho \leq x^\varrho + y^\varrho.
\end{equation}
Thereby,  from the assumption (F4) and by Lemma~\ref{prop_sol_z} we have the following estimate
\begin{eqnarray}
\label{est_Rk1}
	&&\|\mathcal{R}_k^1\| =\| f(t_k^1,\tilde y_k^1,\phi_0(t_k^0))-f(t_k^1,y_k^1,\phi_0(t_k^0)) \|\notag \\
	&&\leq L \left[ \left(1+2 \|\phi_0(t_k^0)\|\right) \|e_k^1\|^{\beta_1} + \left(1+2 \|\phi_0(t_k^0)\|\right) \|e_k^1\|^{\beta_2} \right] \notag\\
	&&= L \left(1+2 \|\phi_0(t_k^0)\|\right) \left(\|e_k^1\|^{\beta_1} + \|e_k^1\|^{\beta_2}\right) \notag\\
	&&\leq 2L \left(1+2 \sup_{0\leq t \leq \tau}\|\phi_0(t)\|\right) (1 + \|e_k^1\|) \leq 2L (1+2K_0)(1+\|e_k^1\|).
\end{eqnarray}
while by Lemma~\ref{euler_bound_lem} and \eqref{est_euler_0tcr}  we obtain
\begin{eqnarray}
\label{est_Lk1}
	&&\|\mathcal{L}_k^1\| = \| f(t_k^1,y_k^1,\phi_0(t_k^0))-f(t_k^1,y_k^1,y_k^0) \| \notag\\
	&&\leq L (1+2 \|y_k^1\|) \|\phi_0(t_k^0) - y_k^0 \|^\gamma
	\leq \bar C_1 (h^{\gamma(\alpha \wedge\gamma)}+h^{\beta_1 \gamma}+h^{\beta_2 \gamma}),
\end{eqnarray}
where $\bar C_1 := L (1+2 \tilde K_1)\tilde C_2^\gamma (1+\|\eta\|)^\gamma$. Hence, by \eqref{err_square}, \eqref{est_Rk1} and \eqref{est_Lk1}
\begin{equation}
	\|e_{k+1}^1\|^2 \leq (1 + \bar D_1 h)\|e_k^1\|^2 + D_1 h^2 + M_1 h (h^{2\gamma(\alpha \wedge\gamma)}+h^{2\beta_1 \gamma}+h^{2\beta_2 \gamma})
\end{equation}
for all $h \in (0, \frac{1}{2})$, $k=0,1,\dots, N-1$. We stress that $\bar D_1,D_1,M_1$ do not depend on $N$. Since  $\|e_0^1\|^2 = 0$, from the discrete Gronwall's inequality we get for all $k=0,1,\dots, N$
\begin{displaymath}
	\|e_{k}^1\|^2 \leq \left(\overline{\widetilde{C}}_1\right)^2 \left( h + h^{2\gamma(\alpha \wedge\gamma)}+h^{2\beta_1 \gamma}+h^{2\beta_2 \gamma} \right),
\end{displaymath}
where $\left(\overline{\widetilde{C}}_1\right)^2 = \frac{e^{\bar D_1 \tau}-1}{\bar D_1} \cdot \max \{D_1, M_1 \}$. Hence
\begin{equation}
\label{err_after_gronwall}
    \max_{0 \leq k \leq N} \|e_k^1 \| \leq \overline{\widetilde{C}}_1 \left( h^{\frac{1}{2}} + h^{\gamma(\alpha \wedge\gamma)}+h^{\beta_1 \gamma}+h^{\beta_2 \gamma} \right).
\end{equation}
From \eqref{est_phi1y1k} and \eqref{err_after_gronwall} we get
\begin{equation}
    \|\phi_1(t_k^1)-y_k^1\|\leq C_0  (h^{\alpha \wedge \gamma}+h^{\beta_1}+h^{\beta_2})+ \|e_k^1\|\leq  C_1 \left( h^{\frac{1}{2}} + h^{\gamma (\alpha \wedge \gamma)}+h^{\beta_1 \gamma}+h^{\beta_2 \gamma} \right),
\end{equation}
for $k=0,1,\dots, N$ where $C_1$ does not depend on $N$.

Let us now assume that there exist $j \in \{ 1, 2, \dots, n-1 \}$ and $C_j$, which does not depend on $N$, such that for all $k=0,1,\dots,N$ it holds
\begin{equation}
\label{inductive_assumption}
    \|\phi_j(t_k^j)-y_k^j\| \leq C_j \sum_{l=1}^j \left( h^{\frac{1}{2} \gamma^{l-1}} + h^{\gamma^l (\alpha \wedge \gamma)} + h^{\beta_1 \gamma^l} + h^{\beta_2 \gamma^l} \right).
\end{equation}
(For $j=1$  the statement has already been proven.) For $t \in [(j+1)\tau, (j+2)\tau]$ we consider the following ODE
\begin{equation}
\label{dde_1j}
	z'(t)=g_{j+1}(t,z(t)), \quad t\in [(j+1)\tau, (j+2)\tau],
\end{equation}
with the initial value $z((j+1)\tau)=\phi_j((j+1)\tau)=\phi_{j}(t_N^j)=\phi_j(t_0^{j+1})$ and $g_{j+1}(t,y)=f(t,y,\phi_j(t-\tau))$. We approximate \eqref{dde_1j} by the following auxiliary Euler scheme 
\begin{eqnarray}
\label{aux_expl_euler_j0}
	\tilde y_0^{j+1}&=&y_0^{j+1}=y_N^j,\\
\label{aux_expl_euler_jk}
	\tilde y_{k+1}^{j+1}&=&\tilde y_k^{j+1}+h\cdot g_{j+1}(t_k^{j+1},\tilde y_k^{j+1}), \quad k=0,1,\ldots, N-1.
\end{eqnarray}
Hence, from the induction assumption \eqref{inductive_assumption}
\begin{displaymath}
	\|z((j+1)\tau) - \tilde y_0^{j+1}\| = \|\phi_j(t_N^j)- y_N^j\| 
	\leq C_j \sum_{l=1}^j \left( h^{\frac{1}{2} \gamma^{l-1}} + h^{\gamma^l (\alpha \wedge \gamma)} + h^{\beta_1 \gamma^l} + h^{\beta_2 \gamma^l} \right).
\end{displaymath}
Applying Lemmas \ref{prop_sol_z}, \ref{props_gn} and \ref{eul_odes}  for $\xi:=\phi_j(t_N^j)$, $g:=g_{j+1}$, $[a,b]:=[(j+1)\tau,(j+2)\tau]$, \linebreak $\displaystyle{\Delta:=C_j \sum_{l=1}^j \left( h^{\frac{1}{2} \gamma^{l-1}} + h^{\gamma^l (\alpha \wedge \gamma)} + h^{\beta_1 \gamma^l} + h^{\beta_2 \gamma^l} \right)}$, we obtain
\begin{displaymath}
\|\phi_j(t_N^j)\|\leq K_j,
\end{displaymath}
and 
\begin{eqnarray}
\label{est_phiyj}
		&&\max\limits_{0\leq k\leq N}\|\phi_{j+1}(t_k^{j+1})-\tilde y_k^{j+1}\| \leq \tilde C_{2}(1+\|\phi_j(t_N^j)\|) (\Delta + h^{\alpha \wedge \gamma} + h^{\beta_1} + h^{\beta_2})\notag \\
		&&\leq \overline{\widetilde{C}}_{j+1} \left( \sum_{l=1}^j h^{\frac{1}{2} \gamma^{l-1}} + h^{\gamma^l (\alpha \wedge \gamma)} + h^{\beta_1 \gamma^l} + h^{\beta_2 \gamma^l} \right).
\end{eqnarray}
Hence, we have for $k=0,1,\ldots,N$
\begin{eqnarray}
\label{est_phijy1k}
        &&\|\phi_{j+1}(t_k^{j+1})-y_k^{j+1}\| \leq \|\phi_{j+1}(t_k^{j+1})-\tilde y_k^{j+1}\| + \|e_k^{j+1}\|\notag\\
        &&\leq\overline{\widetilde{C}}_{j+1} \left( \sum_{l=1}^j h^{\frac{1}{2} \gamma^{l-1}} + h^{\gamma^l (\alpha \wedge \gamma)} + h^{\beta_1 \gamma^l} + h^{\beta_2 \gamma^l} \right)+\|e_k^{j+1}\|,
\end{eqnarray}
where
\begin{displaymath}
	e_k^{j+1}:=\tilde y_k^{j+1}-y_k^{j+1}, \quad k=0,1,\ldots,N,
\end{displaymath}
and $e_0^{j+1}=\tilde y_0^{j+1}-y_0^{j+1}=0$ by \eqref{aux_expl_euler_j0}. Using analogous arguments as in \eqref{rek_ek1}-\eqref{est_Lk1} we obtain  for $h \in (0, 1/2)$ and $k=0,1,\ldots,N-1$ that
\begin{equation}
\label{rec_ekj}
    \|e_{k+1}^{j+1}\|^2 \leq (1+2h)(1+\hat C_{j+1} h) \|e_k^{j+1}\|^2 + 2h^2\|\mathcal{R}_k^{j+1}\|^2 + 2h \|\mathcal{L}_k^{j+1}\|^2
\end{equation}
where $\hat C_{j+1}$ does not depend on $N$. Moreover
\begin{equation}
\label{est_rkj11}
	\|\mathcal{R}_k^{j+1}\| = \| f(t_k^{j+1},\tilde y_k^{j+1},\phi_j(t_k^j))-f(t_k^{j+1},y_k^{j+1},\phi_j(t_k^j)) \|\leq 2L (1+2K_j)(1+\|e_k^{j+1}\|),
\end{equation}
and, in particular by using \eqref{inductive_assumption} and Lemma \ref{euler_bound_lem}, we get
\begin{eqnarray}
\label{est_lkj11}
	&&\|\mathcal{L}_k^{j+1}\| = \| f(t_k^{j+1},y_k^{j+1},\phi_j(t_k^j))-f(t_k^{j+1},y_k^{j+1},y_k^j) \|\leq L (1+2 \|y_k^{j+1}\|) \|\phi_j(t_k^j) - y_k^j \|^\gamma\notag \\
	&&\leq \bar C_{j+1} \sum_{l=1}^j \left( h^{\frac{1}{2} \gamma^l} + h^{\gamma^{l+1} (\alpha \wedge \gamma)}+h^{\beta_1 \gamma^{l+1}}+h^{\beta_2 \gamma^{l+1}} \right),
\end{eqnarray}
where $\bar C_{j+1} := L (1+2 \tilde K_{j+1}) C_j^\gamma$. Hence, from \eqref{rec_ekj}, \eqref{est_rkj11}, and \eqref{est_lkj11} we have for $h \in (0, \frac{1}{2})$ and $k=0,1,\ldots,N$
\begin{displaymath}
    \|e_{k+1}^{j+1}\|^2 \leq (1+\bar D_{j+1} h) \|e_k^{j+1}\|^2 + D_{j+1} h^2 + M_{j+1} h \left[\sum_{l=1}^j \left( h^{\frac{1}{2}\gamma^l} + h^{\gamma^{l+1} (\alpha \wedge \gamma)} + h^{\beta_1 \gamma^{l+1}} + h^{\beta_2 \gamma^{l+1}}\right) \right]^2 \\
\end{displaymath}
where $D_{j+1}$, $M_{j+1}$, $\bar D_{j+1}$ does not depend on $N$. Since $\|e_0^{j+1}\|^2=0$, by the discrete Gronwall's lemma we get that for all $k=0,1,\dots, N$
\begin{displaymath}
    \|e_k^{j+1}\|^2 \leq \left(\overline{\widetilde{C}}_{j+1}\right)^2  \left\{ h + \left[\sum_{l=1}^j \left( h^{\frac{1}{2}\gamma^l} + h^{\gamma^{l+1} (\alpha \wedge \gamma)} + h^{\beta_1 \gamma^{l+1}} + h^{\beta_2 \gamma^{l+1}}\right) \right]^2 \right\}
\end{displaymath}
where $\displaystyle{\left(\overline{\widetilde{C}}_{j+1}\right)^2 = \frac{e^{\bar D_{j+1} \tau} - 1}{\bar D_{j+1}} \cdot \max \{D_{j+1}, M_{j+1} \}}$. Thus
\begin{eqnarray}
\label{est_ekj11}
    &&\max\limits_{0\leq k \leq N}\|e_k^{j+1}\| \leq \overline{\widetilde{C}}_{j+1} \left[ h^{\frac{1}{2}} + \sum_{l=1}^j \left( h^{\frac{1}{2}\gamma^l} + h^{\gamma^{l+1} (\alpha \wedge \gamma)} + h^{\beta_1 \gamma^{l+1}} + h^{\beta_2 \gamma^{l+1}}\right) \right]\notag\\
    &&\leq 2 \overline{\widetilde{C}}_{j+1}\sum_{l=1}^j \left( h^{\frac{1}{2}\gamma^l} + h^{\gamma^{l+1} (\alpha \wedge \gamma)} + h^{\beta_1 \gamma^{l+1}} + h^{\beta_2 \gamma^{l+1}} \right).
\end{eqnarray}
Therefore, from \eqref{est_phijy1k} and \eqref{est_ekj11} we get for $k=0,1,\dots, N$
\begin{displaymath}
	\|\phi_{j+1}(t_k^{j+1})-y_k^{j+1}\|\leq C_{j+1}\sum_{l=1}^{j+1} \left( h^{\frac{1}{2}\gamma^{l-1}} + h^{\gamma^{l} (\alpha \wedge \gamma)} + h^{\beta_1 \gamma^{l}} + h^{\beta_2 \gamma^{l}} \right),
\end{displaymath}
where $C_{j+1}$ does not depend on $N$. This ends the proof. 
\end{proof}
\begin{rem}\rm  Instead of (F4) we can consider the following general assumption:
\begin{itemize}
		\item [(F4*)] 
		There exist $L\geq 0$, $p, q \in \N$, $\alpha,\beta_i,\gamma_j \in (0,1]$ for $i=1,\ldots, p$, $j=1,\ldots, q$ such that for all $t_1,t_2\in [0,+\infty)$, $y_1,y_2,z_1,z_2\in\R^d$
			\begin{eqnarray*}
				\|f(t_1,y_1,z_1)-f(t_2,y_2,z_2)\|&\leq& L\Bigl((1+\|y_1\|+\|y_2\|)\cdot(1+\|z_1\|+\|z_2\|)\cdot |t_1-t_2|^{\alpha}\notag\\
&&+(1+\|z_1\|+\|z_2\|) \sum_{i=1}^p \|y_1-y_2\|^{\beta_i}\notag\\
&&+(1+\|y_1\|+\|y_2\|) \sum_{j=1}^q \|z_1-z_2\|^{\gamma_j}\Bigr).
			\end{eqnarray*}
\end{itemize}
Because of the notational simplicity we considered only the case when  (F4) is assumed. However, we stress that the general case, when (F4*) is assumed,  can be treated by using the same proof technique as used in the proof of Theorem \ref{rate_of_conv_expl_Eul}.
\end{rem}
\section{Numerical experiments}
We have conducted the numerical experiments with the Python implementations of the method described in \eqref{expl_euler_1} and \eqref{expl_euler_11}. The numerical experiments were carried on using Intel\textsuperscript{\tiny\textregistered} Xeon\textsuperscript{\tiny\textregistered} CPU E5-2650 v4 @ 2.20GHz. The exact solutions for the analyzed problems are not known so we proceeded as follows to estimate the empirical rate of convergence. Theorem~\ref{rate_of_conv_expl_Eul} provides the theoretical convergence rate of the Euler method, hence, the approximation computed by the Euler method on the dense mesh will be used as the referential solution ($1000$ times more points than the densest mesh used, i.e. for $N=9\cdot 10^3 \cdot 2^{13}$ and $N=9\cdot 10^3 \cdot 2^{10}$ for SIR model).

The results are presented in the figures comprising the test results for convergence rate, on the y axis there is $-\log_{10} \textrm{(err)}$ and on the x axis the $\log_{10}N$, where $\textrm{err}$ and $N$ are given by \eqref{err}. Then, the slope of the linear regression corresponds to the empirical convergence rate and is always presented.

The empirical convergence rate for presented examples is close to one. It can indicate that in the considered case lower bounds of the error are not sharp. 
\vspace{10pt}
\subsection{Metal phase change model}
Firstly, we consider a modified model describing a phase change of metallic materials from \cite[Chapter 3.3]{Pietrzyk}, see also \cite{NC20, Estrin, Mecking, NC19, Szeliga}. It can be described by delay differential equation due to delay in the response to the change in processing conditions. We consider the following case  
\begin{equation}
\label{example_metal}
    f(t,y,z)=A-B\cdot  \sgn(y)\cdot |y|-C\cdot \sgn(y)\cdot |y|^{\varrho}\cdot |z|^\gamma +D \cdot y \cdot |z|^\gamma,
\end{equation}
where $\varrho, \gamma \in (0,1]$, $A,B,C > 0$, and $D \in \R$. Note that the function \eqref{example_metal} satisfies assumptions (F1)-(F4), see Fact \ref{metal_assumptions_1}. For the numerical experiment we take following values of parameters $A = 1.7137$, $B = 0.7769$, $C = 0.5895$, $D = -0.82615$, $\varrho = 0.973$, $\gamma = 0.714$, $\tau = 9.2603$, $z_0 = 0.05854$, $t_0 = 0$, and $n=5$.
The approximated solution of \eqref{example_metal} is presented in Figure~\ref{pic:sol_test_eq}. Note that the solution graph is similar to the solutions from \cite{NC20, NC19, Szeliga}. The test results for this case are given in Figure~\ref{pic:conv_rate_test_eq}. The computation time is linear with respect to the number of computation points and for $N \cdot n = 90$ equals $0.0023 s$ and for $N \cdot n = 368640$ equals $5.775 s$.

We also tested equation \eqref{example_metal} with different values of the parameters, exemplary solutions are presented in Figure~\ref{pic:sol_test_eq_5.3_other}. The test results regarding the convergence rate for all examples are always close to one.

\pic{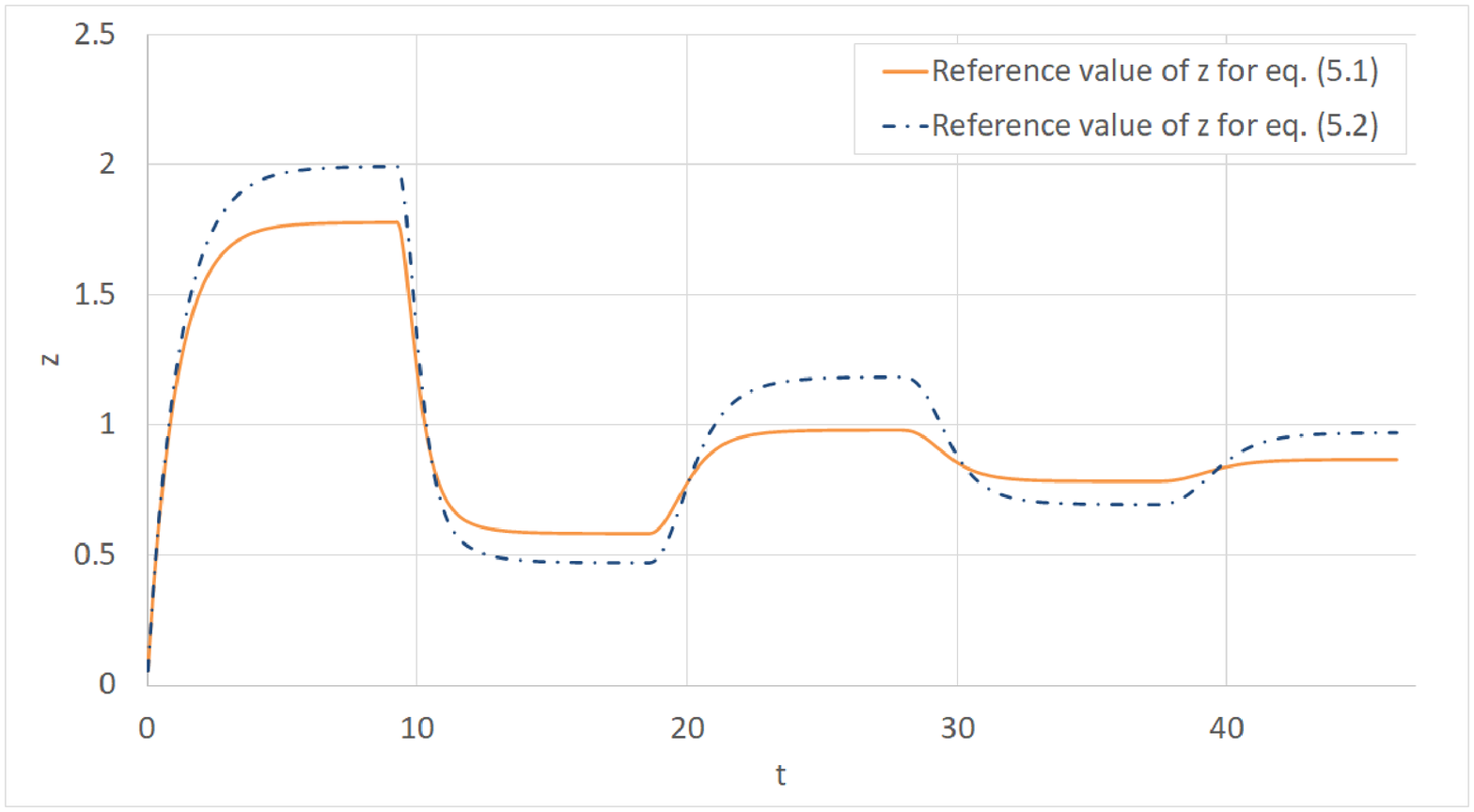}{0.8}{An approximated solution computed on the dense mesh treated as a reference solution of the metal phase change model \eqref{example_metal} and \eqref{example_metal_2}.}{pic:sol_test_eq}

\pic{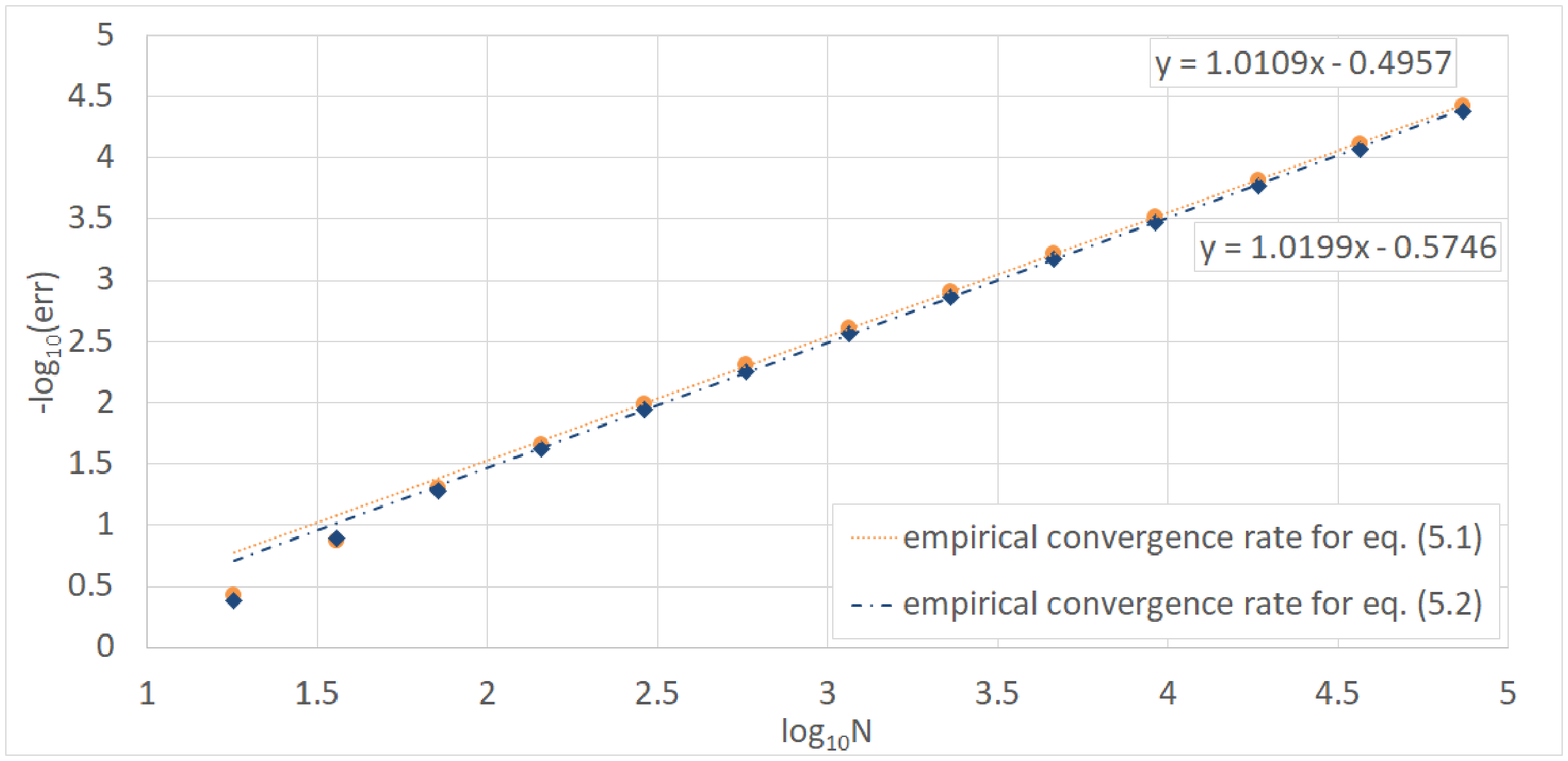}{0.8}{$-\log_{10} \textrm{(err)}$ vs. $\log_{10}N$ for the metal phase change model \eqref{example_metal} and \eqref{example_metal_2}.}{pic:conv_rate_test_eq}

\begin{figure}[!htbp]
	\centering
	\begin{subfigure}[t]{0.4\textwidth}
		\centering
		\includegraphics[height=3.5cm]{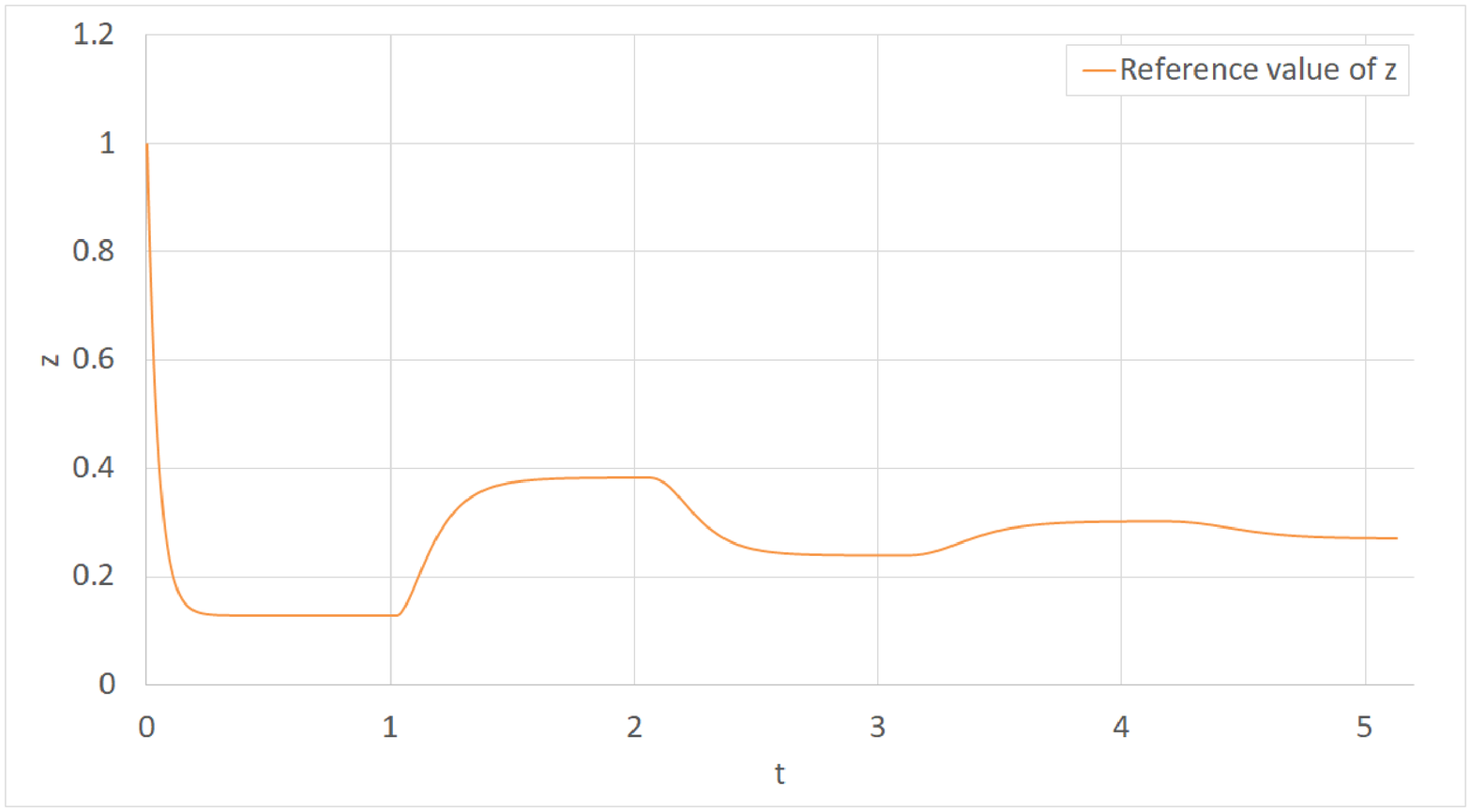}
		\caption{An approximated solution of \eqref{example_metal} for \linebreak $A=3.27$, $B=5.62$, $C=9.89$, $D=-7.31$, $\varrho=0.88$, $\gamma=0.89$, $\tau=1.03$, $z_0=1$, $t_0=0$, $n=5$.}\label{pic:sol_5.3_1}		
	\end{subfigure}
	\quad \quad
	\begin{subfigure}[t]{0.4\textwidth}
		\centering
		\includegraphics[height=3.5cm]{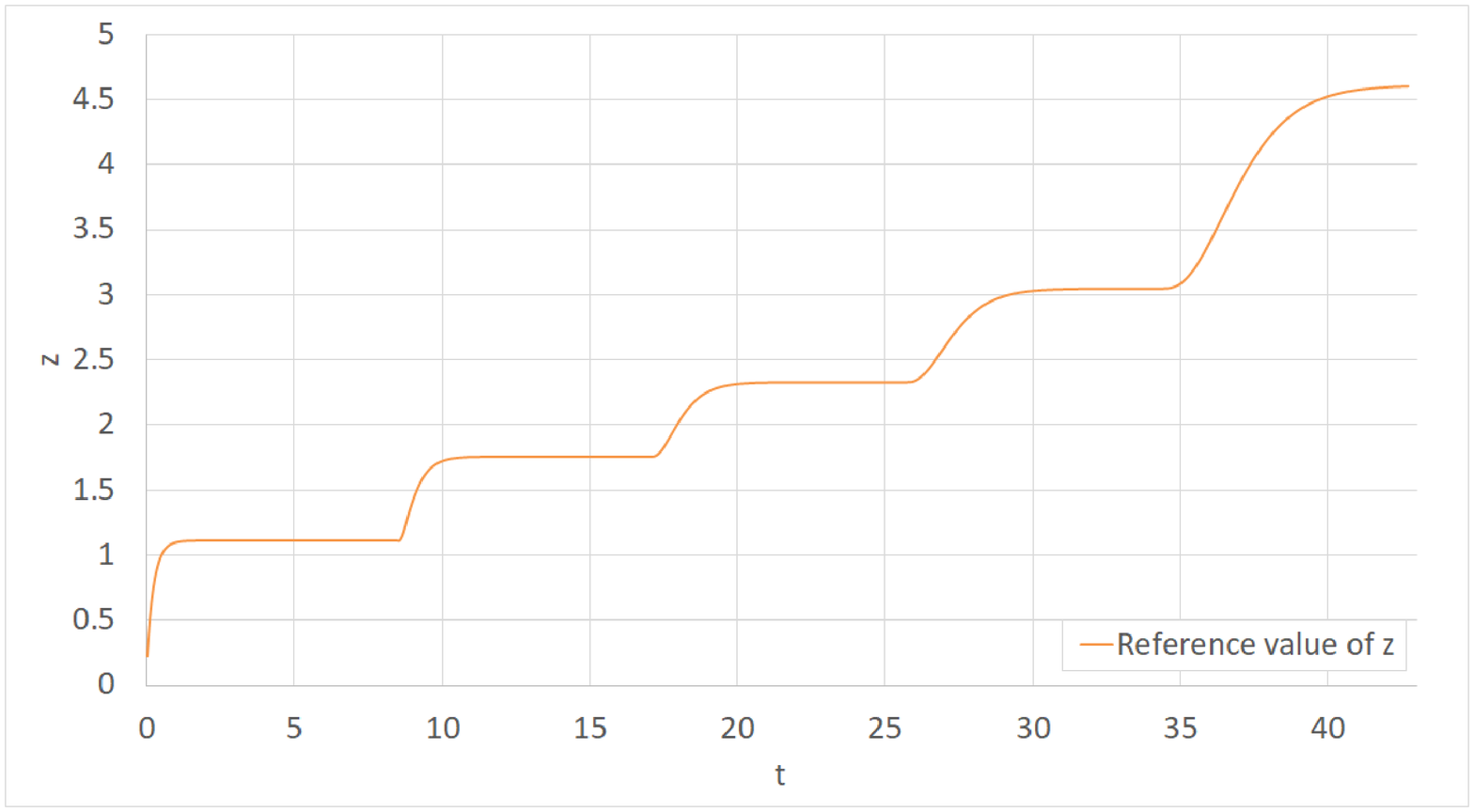}
		\caption{An approximated solution of \eqref{example_metal} for $A=5$, $B=6.62$, $C=0.52$, $D=4$, $\varrho=0.32$, $\gamma=0.33$, $\tau=8.55$, $z_0=0.22$, $t_0=0$, $n=5$.}\label{pic:sol_5.3_2}
	\end{subfigure}		
	\begin{subfigure}[t]{0.4\textwidth}
		\centering
		\includegraphics[height=3.5cm]{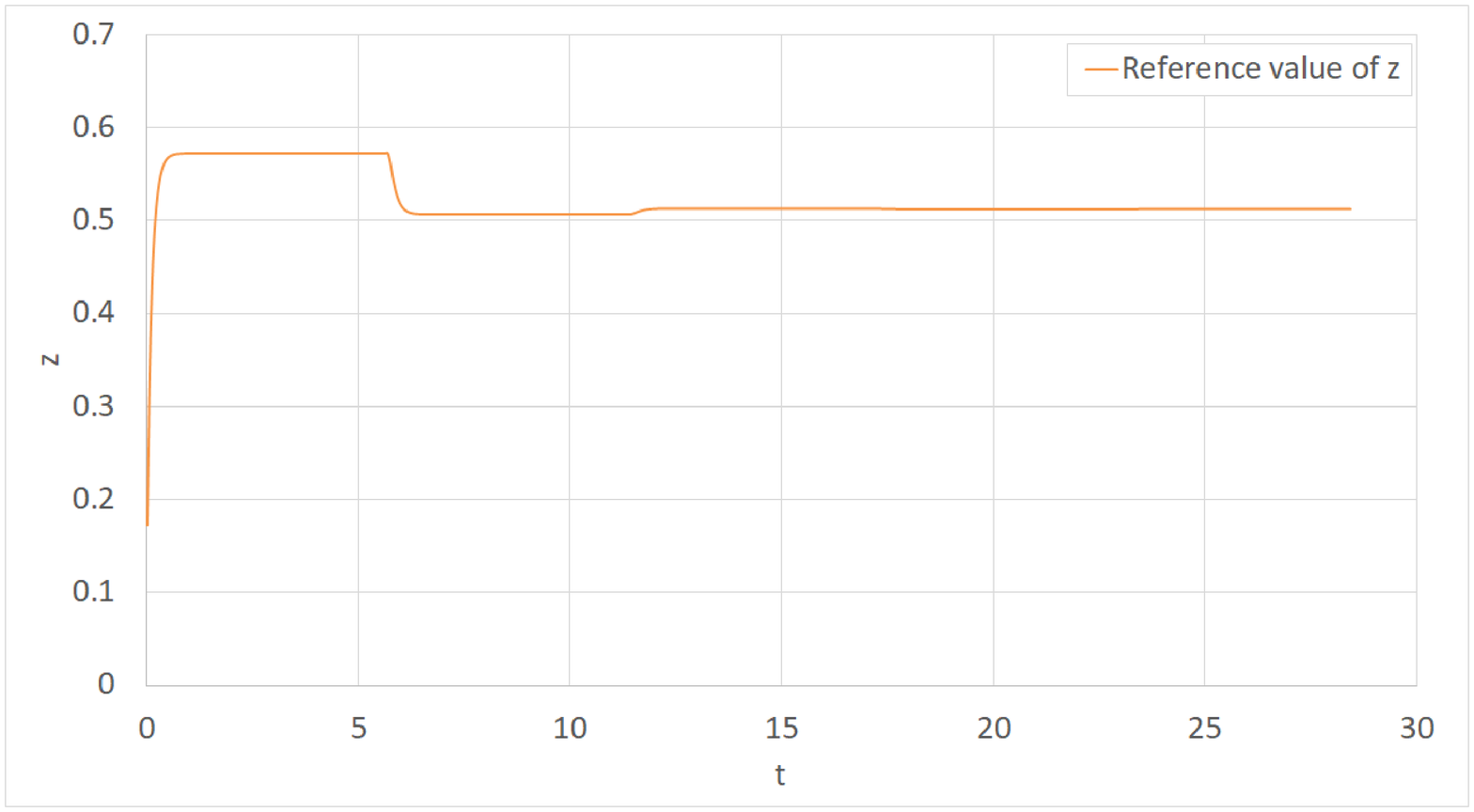}
		\caption{An approximated solution of \eqref{example_metal} for \linebreak $A=5.16$, $B=0.42$, $C=3.61$, \linebreak $D=-6.74$, $\varrho=0.99$, $\gamma=0.11$, $\tau=5.69$, $z_0=0.17$, $t_0=0$, $n=5$.}\label{pic:sol_5.3_3}		
	\end{subfigure}
	\quad
	\begin{subfigure}[t]{0.4\textwidth}
		\centering
		\includegraphics[height=3.5cm]{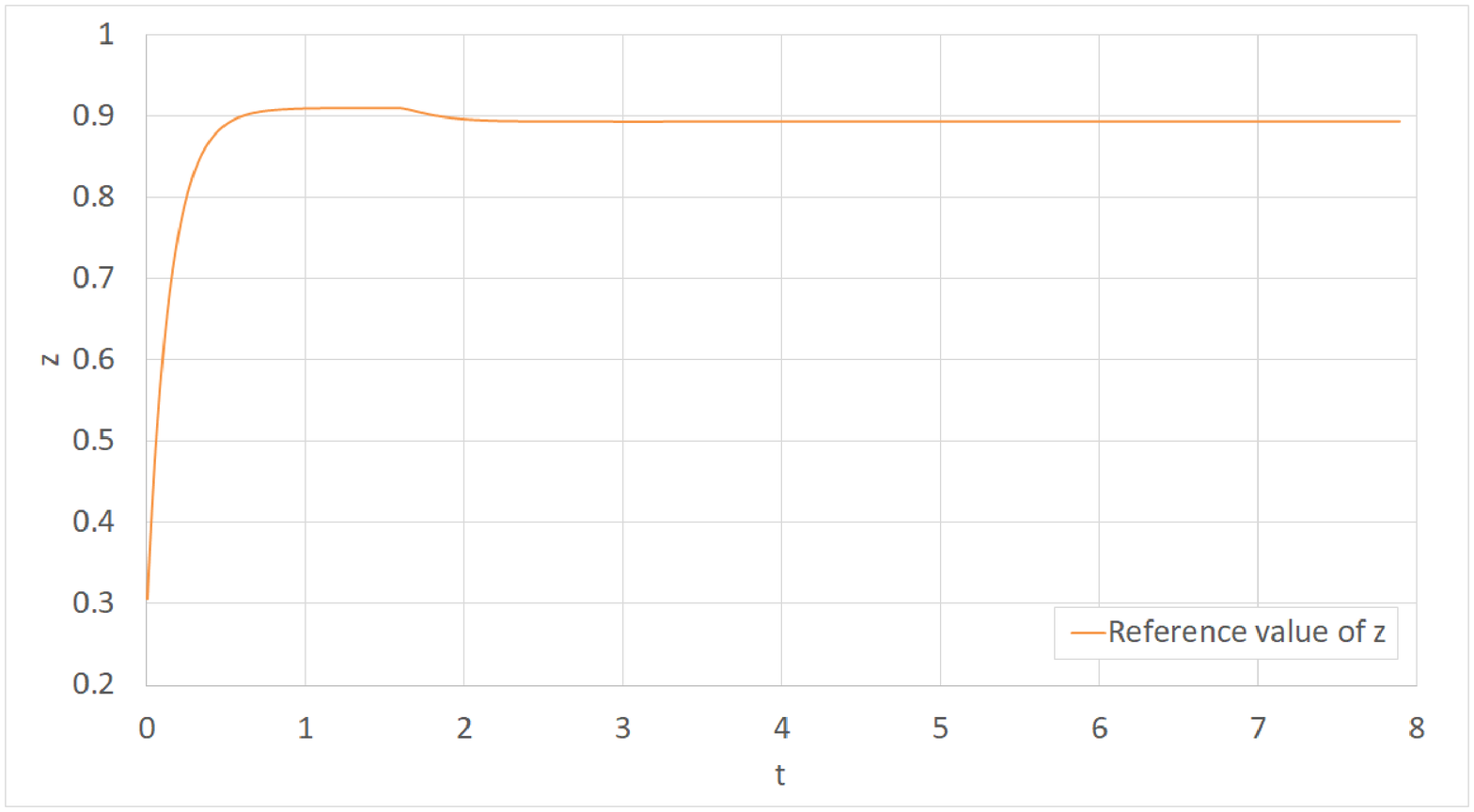}
		\caption{An approximated solution of \eqref{example_metal} for \linebreak $A=6.75$, $B=2.79$, $C=4.7$, \linebreak $D=-0.01$, $\varrho=0.86$, $\gamma=0.02$, $\tau=1.58$, $z_0=0.31$, $t_0=0$, $n=5$. }\label{pic:sol_5.3_4}
	\end{subfigure}
	\caption{Other exemplary reference solutions of the metal phase change model \eqref{example_metal}}\label{pic:sol_test_eq_5.3_other}
\end{figure}

We also consider a similar case with subtle changes, i.e. 
\begin{equation}
\label{example_metal_2}
    f(t,y,z)=A-B\cdot  \sgn(y)\cdot |y|-C\cdot \sgn(y)\cdot |y|^{\varrho}\cdot |z|+D \cdot y \cdot z,
\end{equation}
where $\varrho, \gamma \in (0,1]$, $A,B,C > 0$, and $D \in \R$. This function \eqref{example_metal_2} also satisfies assumptions (F1)-(F4), see Fact \ref{metal_assumptions_2}. For the numerical experiment we take the same values of parameters $A, B, C, D, \varrho, \gamma, \tau, z_0, t_0, n$ like in the \eqref{example_metal} case.
The approximated solution of \eqref{example_metal_2} is presented in Figure~\ref{pic:sol_test_eq} and the test results for this case in Figure~\ref{pic:conv_rate_test_eq}. As before the computation time behaves linear with respect to the number of computation points and for $N \cdot n = 90$ equals $0.002 s$ and for $N \cdot n = 368640$ equals $5.3525 s$.

\vspace{10pt}
\subsection{Model of releasing mature cells into the blood stream}
Another example is modeling the release of mature cells into the blood stream, so called Mackey–Glass equation (see Example 1.1.7, page 7 in \cite{Bellen} and originally introduced in \cite{Mackey}),
\begin{equation}
\label{example_blood}
    z'(t)=\frac{bz(t-\tau)}{1+[z(t-\tau)]^m} - az(t)
\end{equation}
with parameters $a=0.1$, $b=0.2$, $m=10$, $\tau=20$, and $n=500$. 
One can see that the equation \eqref{example_blood} does not fulfill the assumptions (F1)-(F4), still we present the example as a classical test problem where the method behaves properly. The solution of \eqref{example_blood} is presented in Figure~\ref{pic:sol_blood} and the test results for this case in Figure~\ref{pic:conv_rate_blood}. As before the computation time is linear with respect to the number of computation points and for $N \cdot n = 900$ equals $0.01793 s$ and for $N \cdot n = 3686400$ equals $25.6793 s$.

\pic{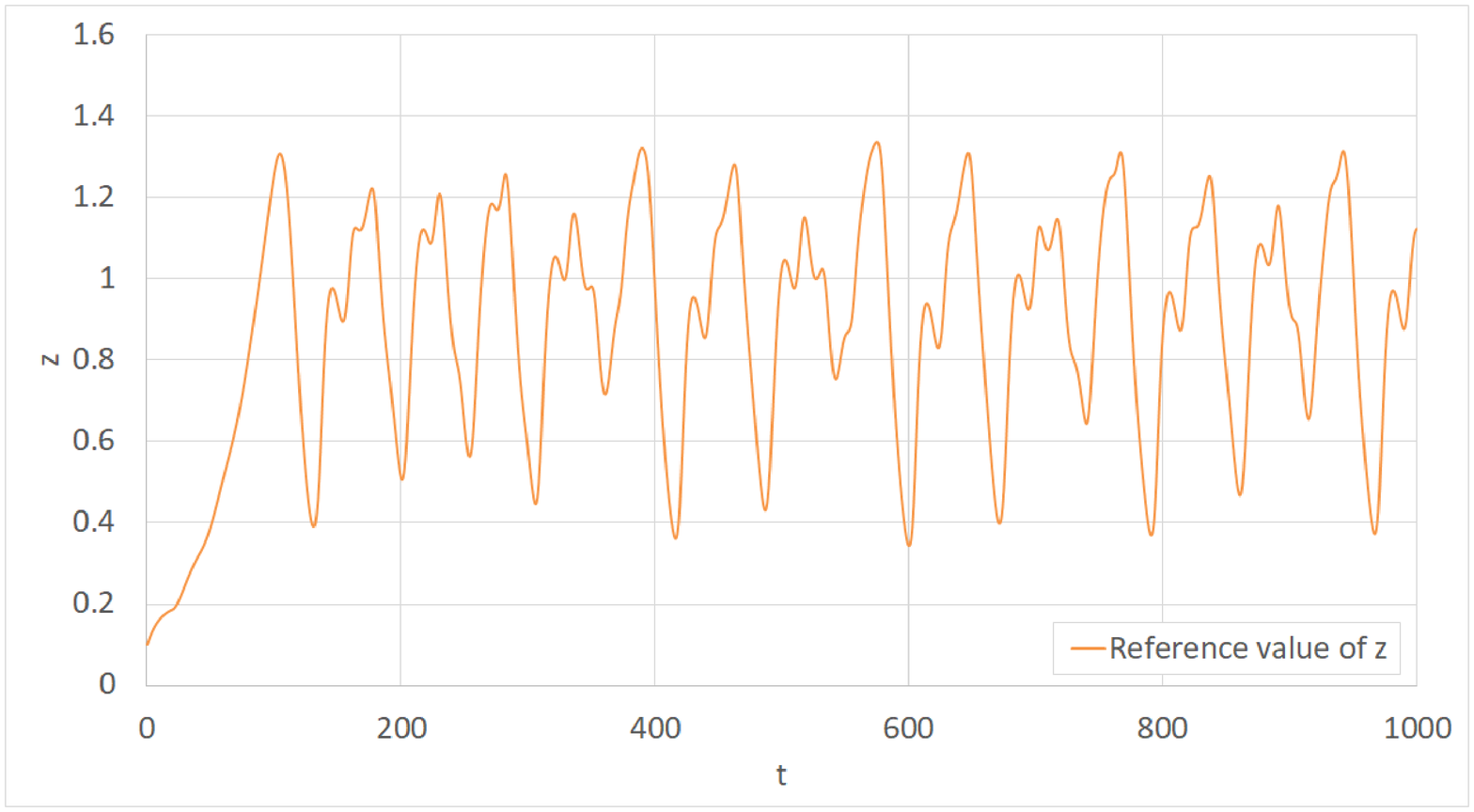}{0.8}{An approximated solution computed on the dense mesh, treated as the reference solution of the model of releasing mature cells into the blood stream \eqref{example_blood}.}{pic:sol_blood}

\pic{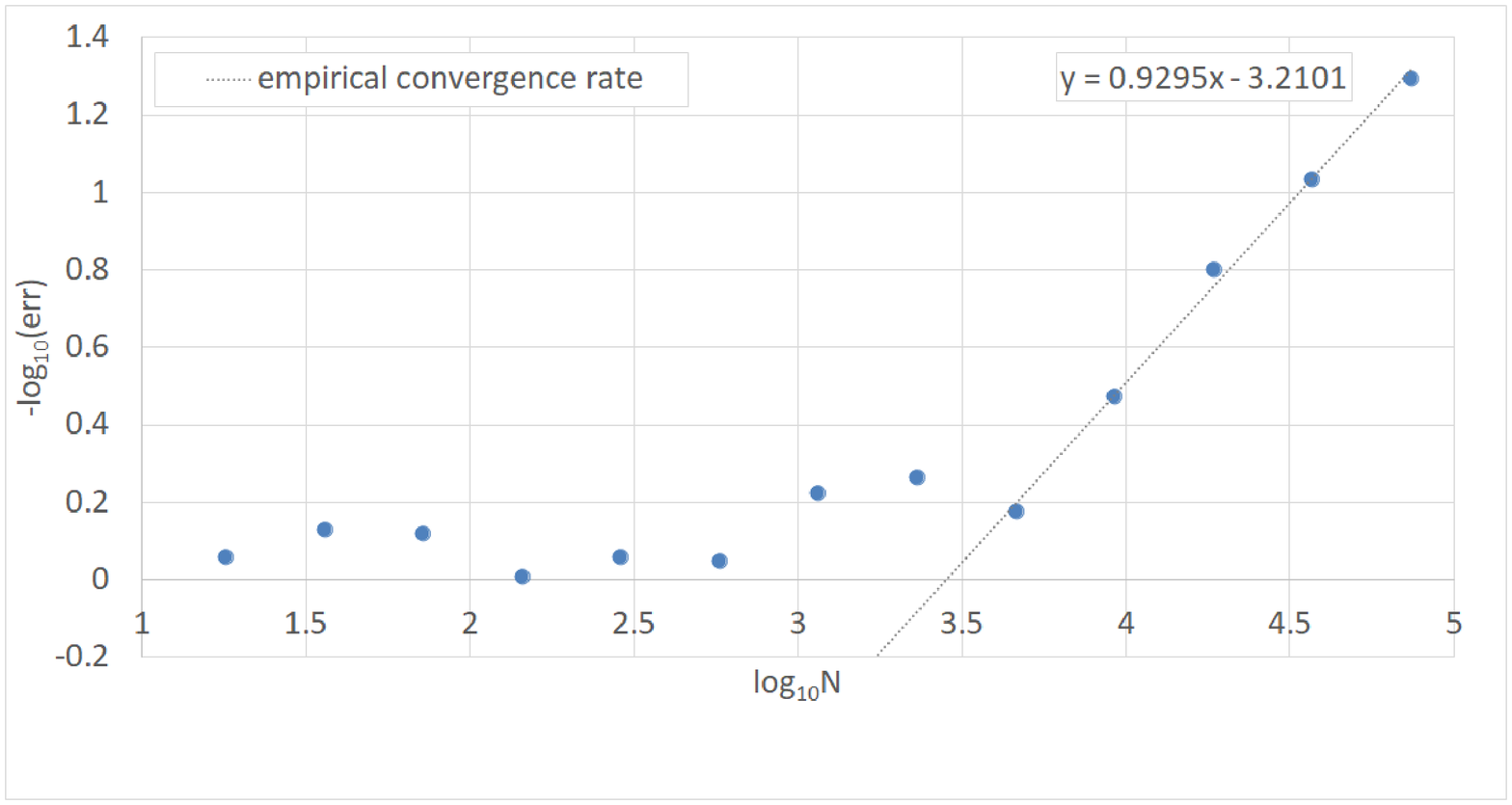}{0.8}{$-\log_{10} \textrm{(err)}$ vs. $\log_{10}N$ for the model of releasing mature cells into the blood stream \eqref{example_blood}.}{pic:conv_rate_blood}

\vspace{10pt}
\subsection{Epidemiology model}
Widely known usage of delay differential equations is a Susceptible-Infectious-Recovered (SIR) model and its modifications. This model family describes spread of the disease. By \cite{Mahrouf}, we consider the following multidimensional delay differential equation
\begin{equation}
\label{example_sir}
    \left\{ \begin{array}{lll}
        S'(t) &=& -\beta(1-u)\frac{S(t) I_s(t)}{N_{pop}}\\
        I_s'(t) &=& \beta\epsilon(1-u)\frac{S(t-\tau_1)I_s(t-\tau_1)}{N_{pop}} - \alpha I_s(t) - (1-\alpha)(\mu_s + \eta_s)I_s(t)\\
        I_a'(t) &=& \beta(1-\epsilon)(1-u)\frac{S(t-\tau_1)I_s(t-\tau_1)}{N_{pop}} - \eta_a I_a(t)\\
        F_b'(t) &=&\alpha \gamma_b I_s(t-\tau_2) - (\mu_b + r_b) F_b(t)\\
        F_g'(t) &=&\alpha \gamma_g I_s(t-\tau_2) - (\mu_g + r_g) F_g(t)\\
        F_c'(t) &=&\alpha \gamma_c I_s(t-\tau_2) - (\mu_c + r_c) F_c(t)\\
        R'(t) &=& \eta_s(1-\alpha)I_s(t-\tau_3) + \eta_a I_a(t-\tau_3) + r_b F_b(t-\tau_4) + r_g F_g(t-\tau_4) + r_c F_c(t-\tau_4)\\
        M'(t) &=& \mu_s(1-\alpha)I_s(t-\tau_3) + \mu_b F_b (t-\tau_4) + \mu_g F_g (t-\tau_4) + \mu_c F_c (t-\tau_4)
    \end{array} \right.
\end{equation}
with values of parameters $\beta=0.4517$, $\epsilon=0.794$,$\gamma_b = 0.8$, $\gamma_g = 0.15$, $\gamma_c = 0.05$, $\alpha = 0.06$, $\eta_a = \frac{1}{21}$, $\eta_s = \frac{0.8}{21}$, $\mu_s = \frac{0.01}{21}$, $\mu_b = 0$, $\mu_g = 0$, $\mu_c = \frac{0.4}{13.5}$, $r_b = \frac{1}{13.5}$, $r_g = \frac{1}{13.5}$, $r_c = \frac{0.6}{13.5}$, $\tau_1=5.5$, $\tau_2=7.5$, $\tau_3=21$, $\tau_4=13.5$, $N_{pop} = 35280000$, and 
\begin{displaymath}
    u = \left\{ \begin{array}{ll}
        0.2, & t\in (0,8],\\
        0.3, & t\in (8,18],\\
        0.4, & t\in (18,35],\\
        0.8, & t\in (35,240].\\
    \end{array} \right.
\end{displaymath}
As initial value vector we take $\eta=(35280000, 20, 0, 0, 0, 0, 0, 0)$.

It can be noticed that in \eqref{example_sir} we have to deal with 4 different values of delay. In order to compute the approximation we take one common delay $\tau=0.5$ and we compute a solution with that value of time lag for $n=480$ (which is equivalent to 240 days). Simultaneously we had to slightly change an algorithm to take proper delay values as arguments in the right-hand side function. A solution of SIR model can be analyzed in Figure~\ref{pic:sol_sir} (without presenting values of $S$ because of the big difference in order of magnitude) and the convergence rate is presented in Figure~\ref{pic:conv_rate_sir}. The computation time is linear with respect to the number of computation points and for $N \cdot n = 8640$ equals $0.586 s$ and for $N \cdot n = 4423680$ equals $85.902 s$.

\pic{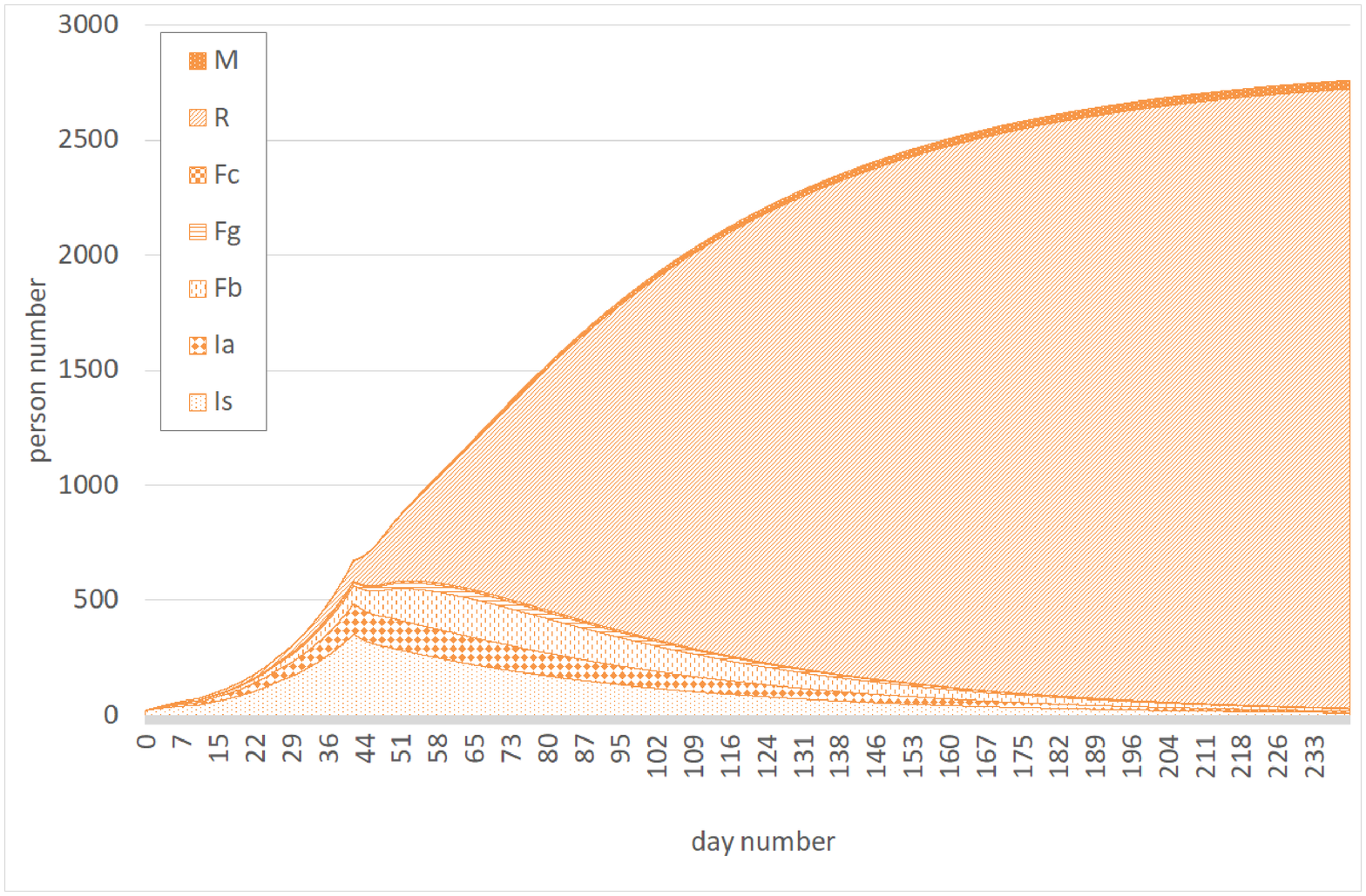}{0.8}{An approximated solution computed on the dense mesh treated as a reference solution of the SIR model \eqref{example_sir}, values of $S$ can be computed as a difference between $N_{pop}$ and values presented on the figure.}{pic:sol_sir}

\pic{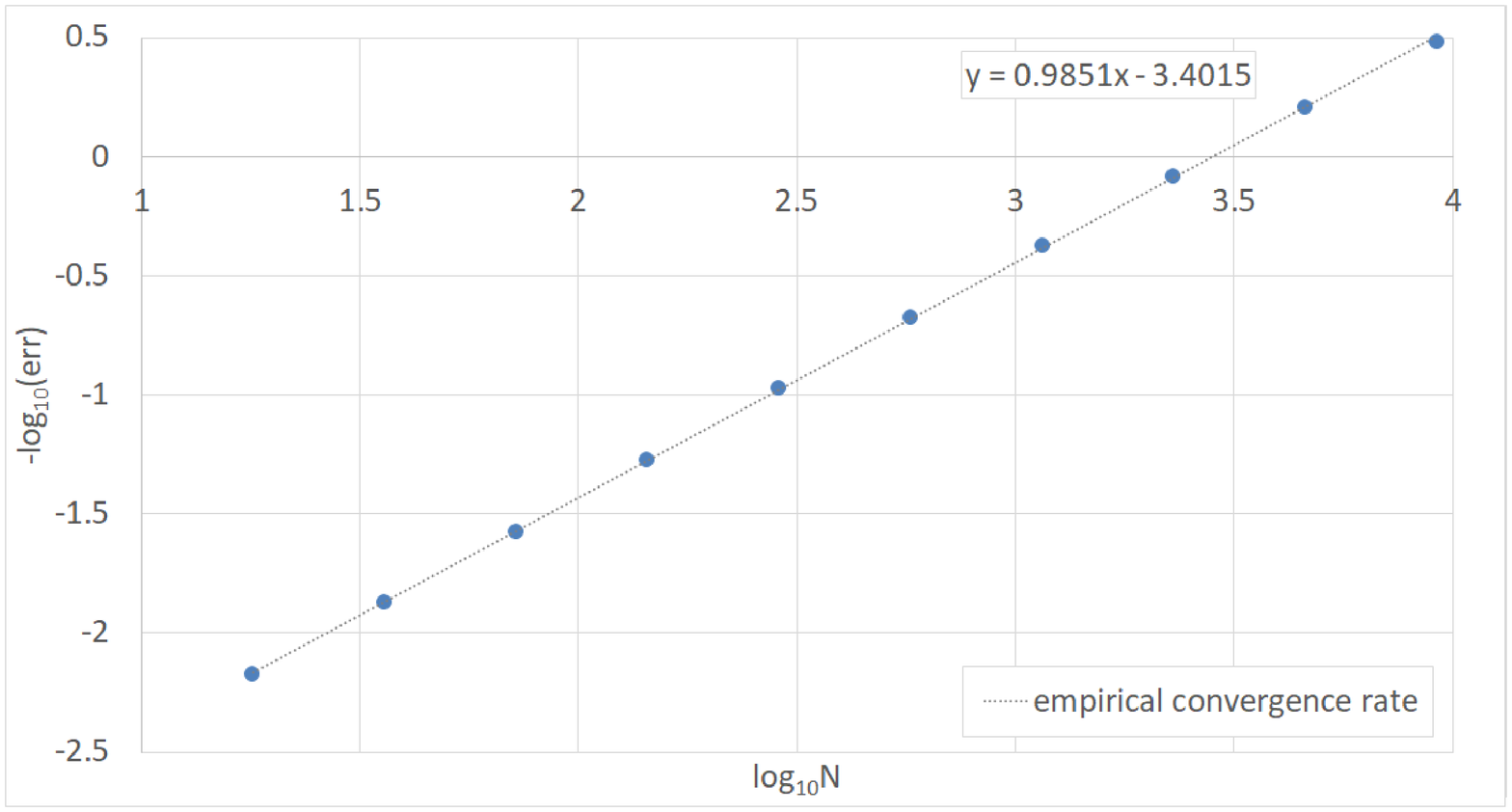}{0.8}{$-\log_{10} \textrm{(err)}$ vs. $\log_{10}N$ for the SIR model \eqref{example_sir}.}{pic:conv_rate_sir}
\section{Appendix - analytical properties of solutions of ODEs and its Euler approximation}
This section consist of some auxiliary results for solutions of ordinary differential equations and its Euler approximation that are used in the paper, especially for proving the main Theorem~\ref{rate_of_conv_expl_Eul}.
\begin{lem}
	\label{odes_exist_sol}
	Let us consider the following ODE
	\begin{equation}
	\label{ODE_1_Peano}
	    z'(t)=g(t,z(t)), \quad t\in [a,b], \quad z(a)=\xi,
	\end{equation}
	where $-\infty<a<b<+\infty$, $\xi\in\R^d$ and $g:[a,b]\times\R^d\to\R^d$ satisfies the following conditions
	\begin{itemize}
		\item [(G1)] $g\in C([a,b]\times\R^d;\R^d)$.
		\item [(G2)] There exists $K\in (0,+\infty)$ such that for all $(t,y)\in [a,b]\times\R^d$
		\begin{displaymath}
		\|g(t,y)\|\leq K(1+\|y\|).
		\end{displaymath}
		\item [(G3)] There exists $H \in \R$ such that for all $t\in [a,b]$, $y_1,y_2\in\R^d$
		\begin{displaymath}
		\langle y_1-y_2, g(t,y_1)-g(t,y_2)\rangle \leq H \| y_1 - y_2 \|^2.
		\end{displaymath}
	\end{itemize}	
	Then we have what follows.
	\begin{itemize}
	    \item [(i)] The equation \eqref{ODE_1_Peano} has a unique  solution $z\in C^1([a,b];\mathbb{R}^d)$ such that
	\begin{equation}
	\label{est_sol_z}
	\sup\limits_{t\in[a,b]}\|z(t)\|\leq (\|\xi\|+K(b-a))e^{K(b-a)},
	\end{equation}
	and for all $t,s\in [a,b]$
	\begin{equation}
	\label{lip_sol_z}
	\|z(t)-z(s)\|\leq \bar K |t-s|,
	\end{equation}
	where $\bar K=K\Bigl(1+(\|\xi\|+K(b-a))e^{K(b-a)}\Bigr)$.
	\item [(ii)] Let us consider $u:[a,b]\to\mathbb{R}^d$ the solution of 
	\begin{equation}
	\label{ODE_2_Peano}
	    u'(t)=g(t,z(t)), \quad t\in [a,b], \quad u(a)=\zeta,
	\end{equation}
	with $\zeta\in\mathbb{R}^d$. Then for all $t\in [a,b]$ we have
	\begin{equation}
	\label{dist_zu}
	    \|u(t)-z(t)\|\leq e^{H_{+}(b-a)}\|\xi-\zeta\|.
	\end{equation}
\end{itemize}
\end{lem}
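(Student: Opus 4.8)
The plan is to decouple the two logically distinct issues in part (i): existence (for which continuity and linear growth alone suffice) and uniqueness (for which the one-sided condition (G3) is the decisive ingredient), and then to read both quantitative bounds off the integral formulation of \eqref{ODE_1_Peano}. The stability estimate in part (ii) will then follow from the same energy-type argument that delivers uniqueness.

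For existence and the sup bound, I would first invoke Peano's theorem, which applies because (G1) gives only continuity, to produce a local $C^1$ solution through $(a,\xi)$. To extend it across all of $[a,b]$ and simultaneously get the a priori estimate, I would pass to the integral form $z(t)=\xi+\int_a^t g(s,z(s))\,ds$ and use (G2) to obtain
\begin{displaymath}
\|z(t)\|\leq \|\xi\|+K(b-a)+K\int_a^t\|z(s)\|\,ds .
\end{displaymath}
Gr\"onwall's inequality then yields $\|z(t)\|\leq(\|\xi\|+K(b-a))e^{K(b-a)}$, which is exactly \eqref{est_sol_z}. This a priori bound precludes finite-time blow-up, so the standard continuation argument forces the maximal solution to be defined on the whole of $[a,b]$. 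The Lipschitz estimate \eqref{lip_sol_z} is then immediate: from $\|z'(t)\|=\|g(t,z(t))\|\leq K(1+\|z(t)\|)\leq \bar K$ and the fundamental theorem of calculus one gets $\|z(t)-z(s)\|\leq \bar K|t-s|$ with $\bar K$ as stated.

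The delicate step, and the one I expect to be the main obstacle, is uniqueness, because the classical Picard--Lindel\"of route is unavailable: (G3) is only a one-sided Lipschitz condition, not a genuine Lipschitz bound. The remedy is an energy estimate rather than a contraction argument. Taking two solutions $z_1,z_2$ with $z_1(a)=z_2(a)=\xi$ and setting $w=z_1-z_2$, I would differentiate $\|w\|^2$ and apply (G3):
\begin{displaymath}
\frac{d}{dt}\|w(t)\|^2=2\langle w(t),\,g(t,z_1(t))-g(t,z_2(t))\rangle\leq 2H\|w(t)\|^2 .
\end{displaymath}
Since $w(a)=0$, Gr\"onwall's inequality forces $w\equiv 0$ on $[a,b]$, giving uniqueness. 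The point is that $\langle y_1-y_2,g(t,y_1)-g(t,y_2)\rangle$ is precisely the quantity controlling $\tfrac{d}{dt}\|w\|^2$, so the one-sided hypothesis is exactly strong enough for this estimate even though it is too weak for the usual fixed-point scheme.

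For part (ii) I would run the same energy computation on the difference of the two trajectories. Writing $w=u-z$ and subtracting the governing equations, (G3) gives $\tfrac{d}{dt}\|w(t)\|^2\leq 2H\|w(t)\|^2$, whence $\|w(t)\|^2\leq\|w(a)\|^2 e^{2H(t-a)}$ by Gr\"onwall. Replacing $H$ by $H_+=\max\{0,H\}$ covers both regimes at once: when $H\geq 0$ one has $e^{H(t-a)}\leq e^{H_+(b-a)}$, and when $H<0$ the difference is nonincreasing, hence still bounded by $\|\zeta-\xi\|\leq e^{H_+(b-a)}\|\zeta-\xi\|$. This produces the bound \eqref{dist_zu}. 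Apart from the uniqueness argument, every step is a routine application of Gr\"onwall's inequality to a scalar differential inequality for $\|w\|^2$.
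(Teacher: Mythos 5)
Your proposal is correct and takes essentially the same route as the paper's own proof: Peano's theorem combined with the Gronwall bound $\|z(t)\|\leq(\|\xi\|+K(b-a))e^{K(b-a)}$ for global existence and \eqref{est_sol_z}, the bound $\|z'(t)\|=\|g(t,z(t))\|\leq \bar K$ for \eqref{lip_sol_z}, and the energy estimate $\frac{d}{dt}\|w(t)\|^2\leq 2H\|w(t)\|^2$ followed by Gronwall both for uniqueness and for part (ii) with $H$ replaced by $H_+$. The only point worth noting is that, exactly like the paper's proof, you implicitly read \eqref{ODE_2_Peano} as $u'(t)=g(t,u(t))$ (as literally written, with $g(t,z(t))$ on the right-hand side, $u-z$ would be constant and \eqref{dist_zu} trivial), and under that reading your argument coincides with the paper's.
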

\begin{proof} Since the right-hand side function $g$ is continuous and it is of at most linear growth, Peano's theorem guarantees existence of the $C^1$-solution (e.g. see Theorem 70.4, page 292 in \cite{gorniewicz}). Now, we show that the uniqueness follows from the one-sided Lipschitz assumption (G3). Namely, let us assume that \eqref{ODE_1_Peano} has two solutions $z=z(t)$ and $x=x(t)$ with the same initial-value $z(a)=x(a)=\xi$. By (G3) we have for all $t\in [a,b]$ that
	\begin{eqnarray}
	\label{diff_sol}
	&&\frac{d}{dt}\| z(t)-x(t)\|^2 = 2 \left\langle z(t)-x(t), g(t, z(t))-g(t, x(t)) \right\rangle\notag\\
	&&\leq 2 H \| z(t)-x(t) \|^2\leq 2 H_+ \| z(t)-x(t) \|^2.
	\end{eqnarray}
	Let us consider the $C^1$-function $[a,b]\ni t\to \varphi(t) =  \| z(t)-x(t)\|^2\in [0,+\infty)$, where $\varphi(a)=0$.
	Integrating two sides of the preceding inequality we get
	\begin{displaymath}
	\varphi(t)  \leq 2 H_+ \int_a^t \varphi(s) ds.
	\end{displaymath}
	Hence, from the Grownall's lemma we obtain that $\varphi(t)=0$ for all $t\in [a,b]$, which in turns implies that $z(t)=x(t)$ for all $t\in [a,b]$.
 
	Note that, by the assumption (G2), for all $t\in [a,b]$ it holds
	\begin{equation}
	\|z(t)\| \leq \|\xi\| + \int_a^t \|g(s, z(s))\| ds\leq  \|\xi\| + K(b-a) + K \int_a^t\| z(s) \| ds.
	\end{equation}
	Again by the Gronwall's lemma we obtain for all $t\in [a,b]$ that
	\begin{equation}
	\|z(t)\| \leq (\|\xi\| + K(b-a)) e^{K(t-a)}.
	\end{equation}
	This implies \eqref{est_sol_z}.
	
	By (G2) and \eqref{est_sol_z} we obtain for all $t, s \in [a,b]$
	\begin{eqnarray}
	&&\|z(t) - z(s)\| = \|z(t \vee s) - z(t \wedge s)\|  \leq \int_{t \wedge s}^{t \vee s} \|g(u, z(u))\| du\notag\\
	&&\leq K (1 + \sup_{t \in [a,b]} \|z(t)\|)(t \wedge s - t \vee s)
	\leq \bar{K} |t-s|,
	\end{eqnarray}
	where $\bar{K}=  K\Bigl(1+(\|\xi\|+K(b-a))e^{K(b-a)}\Bigr)$. 
	Hence, the proof of \eqref{lip_sol_z} is completed.
	
	As in \eqref{diff_sol} we get that for all $t\in [a,b]$
	\begin{equation}
	   \frac{d}{dt}\| z(t)-u(t)\|^2 \leq 2H_+\|z(t)-u(t)\|^2
	\end{equation}
	which implies that
	\begin{equation}
	    \|z(t)-u(t)\|^2\leq \|\xi-\zeta\|^2+2H_+\int\limits_a^t\|z(s)-u(s)\|^2ds.
	\end{equation}
	By using Gronwall's lemma we get \eqref{dist_zu}.
\end{proof}
\begin{lem}
	\label{eul_odes}
	Let us consider the  following ordinary differential equation
	\begin{equation}
	\label{ODE_1}
	z'(t)=g(t,z(t)), \quad t\in [a,b], \quad z(a)=\xi,
	\end{equation}
	where $-\infty<a<b<+\infty$, $\eta\in\R^d$ and 	$g:[a,b]\times\R^d\to\R^d$ satisfies the following conditions:
	\begin{itemize}
		\item [(G1)] $g\in C([a,b]\times\R^d;\R^d)$.
		\item [(G2)] There exists $K\in (0,+\infty)$ such that for all $(t,y)\in [a,b]\times\R^d$
		\begin{displaymath}
		\|g(t,y)\|\leq K(1+\|y\|).
		\end{displaymath}
		\item [(G3)] There exists $H \in \R$ such that for all $t\in [a,b]$, $y_1,y_2\in\R^d$
		\begin{displaymath}
		\langle y_1-y_2, g(t,y_1)-g(t,y_2)\rangle \leq H \| y_1 - y_2 \|^2.
		\end{displaymath}
		\item [(G4)] There exist $L\geq 0$, $p \in\mathbb{N}$, $\alpha$, $\beta_1,\beta_2,\ldots,\beta_p \in (0,1]$, such that for all $t_1,t_2\in [a,b]$, $y_1,y_2\in\R^d$
			\begin{displaymath}
			\|g(t_1,y_1)-g(t_2,y_2)\|\leq L\Bigl((1+\|y_1\|+\|y_2\|)\cdot |t_1-t_2|^{\alpha}+ \sum_{i=1}^p \|y_1-y_2\|^{\beta_i}\Bigr).
			\end{displaymath}
	\end{itemize}
	Let us consider the Euler method based on equidistant discretization. Namely, for $n\in\N$, $\Delta\in [0,+\infty)$ we set $h=(b-a)/n$, $t_k=a+kh$, $k=0,1,\ldots,n$, and let $y_0 \in \R^d$ be any vector from the ball $B(\xi, \Delta) = \{ y \in R^d: \|\xi-y\|\leq\Delta \}$. We take
	\begin{equation}
		\label{def_Euler_g}
			y_{k+1}=y_k+h \cdot g(t_k,y_k), \quad k=0,1,\ldots,n-1.
	\end{equation}
	Then the following holds
	\begin{itemize}
		\item [(i)] There exists $\tilde{C_1}=\tilde{C_1}(b-a,K)\in (0,+\infty)$ such that for all $n\in\N$, \linebreak $\Delta \in [0,+\infty)$, $\xi\in\mathbb{R}^d$ $y_0 \in B(\xi, \Delta)$  we have 
		\begin{equation}
		\label{eq:711}
			\max\limits_{0\leq k\leq n}\|y_k\|\leq \tilde{C_1}(1+\|\xi\|)(1+\Delta).
		\end{equation}	
		\item [(ii)] There exists $\tilde{C_2}=\tilde{C_2}(b-a,L,K,H,p,\alpha, \beta_1,\ldots,\beta_p)\in (0,+\infty)$ such that for all \linebreak $n\in\N$, $\Delta \in [0,+\infty)$,  $\xi\in\mathbb{R}^d$ $y_0 \in B(\xi, \Delta)$  we have 
		\begin{equation}
		\label{est_euler_err}
		\max\limits_{0\leq k\leq n}\|z(t_k)-y_k\|\leq \tilde{C_2}(1+\|\xi\|)\Bigl(\Delta+h^\alpha +  \sum\limits_{i=1}^p h^{\beta_i}\Bigr).
		\end{equation}
	\end{itemize}
\end{lem}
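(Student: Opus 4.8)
The plan is to prove the two claims separately: (i) by a direct discrete Gronwall argument, and (ii) by embedding the Euler polygon into continuous time and exploiting the one-sided Lipschitz condition (G3). For (i) I would start from the recursion \eqref{def_Euler_g} and (G2) to get $\|y_{k+1}\|\le (1+hK)\|y_k\|+hK$, equivalently $\|y_{k+1}\|+1\le (1+hK)(\|y_k\|+1)$. Iterating and using $(1+hK)^k\le e^{Khk}\le e^{K(b-a)}$ together with $\|y_0\|\le \|\xi\|+\Delta$ yields $\|y_k\|\le e^{K(b-a)}(1+\|\xi\|)(1+\Delta)$, which is (i) with $\tilde C_1=e^{K(b-a)}$.

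For (ii) the key device is the continuous Euler polygon $\bar y(t):=y_k+(t-t_k)\,g(t_k,y_k)$ for $t\in[t_k,t_{k+1}]$, so that $\bar y(t_k)=y_k$ and $\bar y'(t)=g(t_k,y_k)$ on each subinterval. Setting $v(t):=\|z(t)-\bar y(t)\|^2$, on each $(t_k,t_{k+1})$ I would compute $v'(t)=2\langle z(t)-\bar y(t),\,g(t,z(t))-g(t_k,y_k)\rangle$ and split the increment as $\bigl[g(t,z(t))-g(t,\bar y(t))\bigr]+\bigl[g(t,\bar y(t))-g(t_k,y_k)\bigr]$. The first bracket is controlled by (G3), contributing at most $2H_+v(t)$; the second is a local consistency term to be bounded by Cauchy--Schwarz. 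This is the crucial point: because the one-sided Lipschitz part enters \emph{linearly} in $v$ with no additive remainder, no spurious $h^{1/2}$ term appears, in contrast with the purely discrete squaring argument used in the proof of Theorem~\ref{rate_of_conv_expl_Eul}.

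To estimate the consistency term I would use (G4) with $\|\bar y(t)-y_k\|\le hK(1+\|y_k\|)$ and $|t-t_k|\le h$, together with part (i) to bound $\|y_k\|$ and $\|\bar y(t)\|$ and the Lipschitz bound on $z$ from Lemma~\ref{odes_exist_sol}; this gives $\|g(t,\bar y(t))-g(t_k,y_k)\|\le \psi$ with $\psi\le C(1+\|\xi\|)(1+\Delta)\bigl(h^{\alpha}+\sum_i h^{\beta_i}\bigr)$. Hence $v'(t)\le 2H_+v(t)+2\sqrt{v(t)}\,\psi\le (2H_++1)v(t)+\psi^2$ by Young's inequality, and since $v(a)=\|\xi-y_0\|^2\le\Delta^2$, the continuous Gronwall lemma yields $v(t)\le e^{(2H_++1)(b-a)}\bigl(\Delta^2+(b-a)\psi^2\bigr)$. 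Taking square roots at the nodes, where $\sqrt{v(t_k)}=\|z(t_k)-y_k\|$, and absorbing the factor $(1+\Delta)$ using that $h\le b-a$ is bounded (so that $(1+\Delta)(h^{\alpha}+\cdots)\le C(\Delta+h^{\alpha}+\cdots)$), gives exactly \eqref{est_euler_err}.

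The main obstacle is the consistency estimate combined with the bookkeeping that $\bar y$ is only piecewise $C^1$: one must justify integrating the differential inequality for the continuous, piecewise-smooth function $v$ across the kinks at the nodes $t_k$, and carefully track how the growth factors from part (i) and the Lipschitz constant $\bar K$ of $z$ combine, so that the final constant depends only on the data $(b-a,L,K,H,p,\alpha,\beta_1,\dots,\beta_p)$ and the stated form, linear in $(1+\|\xi\|)$, is preserved.
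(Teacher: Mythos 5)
Your part (i) coincides with the paper's argument: the same recursion $\|y_{k+1}\|\le(1+hK)\|y_k\|+hK$ with $\|y_0\|\le\|\xi\|+\Delta$ and discrete Gronwall, yielding $\tilde C_1=e^{K(b-a)}$. For part (ii), however, your route is genuinely different from the paper's, and it is correct. The paper runs a Lady-Windermere's-fan argument: on each subinterval $[t_k,t_{k+1}]$ it introduces the exact local solution $z_k$ of $z_k'(t)=g(t,z_k(t))$, $z_k(t_k)=y_k$, bounds the one-step defect $\|z_k(t_{k+1})-y_{k+1}\|\le C(1+\|\xi\|)(1+\Delta)\,h\bigl(h^{\alpha}+\sum_{i=1}^p h^{\beta_i}\bigr)$ by integrating (G4), and propagates errors through the flow-stability estimate of Lemma~\ref{odes_exist_sol}(ii), $\|z(t_{k+1})-z_k(t_{k+1})\|\le e^{hH_+}\|z(t_k)-y_k\|$, closing with a discrete Gronwall step carried out at \emph{first} powers of the error. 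You instead compare $z$ directly with the continuous Euler polygon $\bar y$ via the energy function $v=\|z-\bar y\|^2$, apply (G3) pointwise to $g(t,z(t))-g(t,\bar y(t))$, treat $g(t,\bar y(t))-g(t_k,y_k)$ as a consistency remainder controlled through (G4), $\|\bar y(t)-y_k\|\le hK(1+\|y_k\|)$ and part (i), and finish with Young's inequality and the continuous Gronwall lemma. Both proofs use the one-sided condition in a way that is linear in the error, so neither incurs the spurious $h^{1/2}$ produced by the discrete squaring in the proof of Theorem~\ref{rate_of_conv_expl_Eul}; your remark on this is accurate. What each approach buys: the paper's decomposition factors the proof into a reusable stability lemma plus a purely local consistency bound and never differentiates piecewise-smooth quantities, whereas yours dispenses with Lemma~\ref{odes_exist_sol}(ii) entirely (you still need existence and uniqueness of $z$, but not the stability part) at the cost of the easily supplied justification that $v$ is continuous and piecewise $C^1$, hence absolutely continuous, so the inequality $v'\le(2H_++1)v+\psi^2$ integrates across the kinks at the nodes. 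Two minor bookkeeping notes: your consistency term involves only $\bar y$ and $y_k$, so your appeal to the Lipschitz bound on $z$ from Lemma~\ref{odes_exist_sol} is superfluous (though you do need an inequality such as $(1+x)^{\varrho}\le 2(1+x)$ to keep the bound linear in $(1+\|\xi\|)(1+\Delta)$, exactly as the paper does); and your absorption $(1+\Delta)\bigl(h^{\alpha}+\sum_{i=1}^p h^{\beta_i}\bigr)\le C\bigl(\Delta+h^{\alpha}+\sum_{i=1}^p h^{\beta_i}\bigr)$ is legitimate precisely because $h\le b-a$, which is the same implicit absorption the paper performs in its final ``elementary calculations.''
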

\begin{proof} Fix $n\in\N$, $\Delta\in [0,+\infty)$, $\xi\in\mathbb{R}^d$ and $y_0\in B(\xi,\Delta)$.	By the assumption (G2) we have that for all $k=0,1,\ldots,n-1$
	\begin{displaymath}
		\|y_{k+1}\|\leq \|y_k\|+h \|g(t_k,y_k)\| \leq (1+hK)\|y_k\|+hK
	\end{displaymath}
	and, since $y_0 \in B(\xi, \Delta)$, $\|y_0\|\leq  \Delta + \|\xi\|$.
	Hence, by the discrete version of Gronwall's lemma we get that for all $k=0,1,\ldots,n$ that
	\begin{displaymath}
		\label{EST_yk_gronwall}
		\|y_k\|\leq e^{K(b-a)}(\Delta + \|\xi\|) + e^{K(b-a)}-1\leq e^{K(b-a)}(1+\|\xi\|)(1+\Delta).
	\end{displaymath}
This proves \eqref{eq:711} with $\tilde C_1= e^{K(b-a)}$.
	
	We now prove \eqref{est_euler_err}. For  $k=0,1,\ldots,n-1$ we consider the following local ODE
	\begin{equation}
		\label{def_local_zk}
		z_k'(t)=g(t,z_k(t)), \quad t\in [t_k,t_{k+1}], \quad z_k(t_k)=y_k.
	\end{equation}
	By Lemma~\ref{odes_exist_sol} there exists unique solution $z_k:[t_k,t_{k+1}]\to\mathbb{R}^d$ of \eqref{def_local_zk}. From the assumption (G2) and by \eqref{eq:711} we get for all $t\in [t_k,t_{k+1}]$, $k=0,1,\ldots,n-1$ that
	\begin{displaymath}
		\|z_k(t)\|
		\leq \tilde{C_1} (1+\|\xi\|)(1+\Delta)+K(b-a)+K\int\limits_{t_k}^t \|z_k(s)\|ds.
	\end{displaymath}
	The use of Gronwall's lemma yields
	\begin{equation}
	\label{est_sup_zk}
		\max\limits_{0\leq k\leq n-1}\sup\limits_{t\in [t_k,t_{k+1}]}\|z_k(t)\|\leq C_2 (1+\|\xi\|)(1+\Delta),
	\end{equation}
	where $C_2=e^{K(b-a)} \Bigl( \tilde{C_1}+K(b-a) \Bigr)$. By (G2) and \eqref{est_sup_zk} we get  for all $t\in [t_k,t_{k+1}]$, \linebreak $k=0,1,\ldots,n-1$
	\begin{eqnarray}
	\label{est_sup_zkyk}
		&&\|z_k(t)-y_k\|\leq \int\limits_{t_k}^t \|g(s,z_k(s))\|ds\leq hK\Bigl(1+\sup\limits_{t\in [t_k,t_{k+1}]}\|z_k(t)\|\Bigr)\notag\\
		&&\leq hK\Bigl(1+C_2(1+\|\xi\|)(1+\Delta)\Bigr)\leq hC_3(1+\|\xi\|)(1+\Delta),
	\end{eqnarray}
	with $C_3=K(1+C_2)$. From Lemma \ref{odes_exist_sol} (ii) we arrive at
	\begin{eqnarray}
		\label{EST_INEQ1}
		\|z(t_{k+1})-y_{k+1}\|&\leq& \|z(t_{k+1})-z_k(t_{k+1})\|+\|z_k(t_{k+1})-y_{k+1}\|\notag\\
		&\leq& e^{hH_+}\|z(t_k)-y_k\|+\|z_k(t_{k+1})-y_{k+1}\|
	\end{eqnarray}
	for $k=0,1,\ldots,n-1$.  Using (G4), \eqref{eq:711}, \eqref{est_sup_zk} and \eqref{est_sup_zkyk} we get
	\begin{eqnarray}
	&&\|z_k(t_{k+1})-y_{k+1}\|\leq\int\limits_{t_k}^{t_{k+1}}\|g(s,z_k(s))-g(t_k,y_k)\|ds \notag\\
	&& \leq L\int\limits_{t_k}^{t_{k+1}}\Bigl((1+\|z_k(s)\|+\|y_k\|)\cdot (s-t_k)^{\alpha}+ \sum_{i=1}^p \|z_k(s)-y_k\|^{\beta_i}\Bigr)ds\notag \\
	\label{EST_INEQ21}
	&&\leq Lh\Bigl((1+(\tilde{C_1}+C_2)(1+\|\xi\|)(1+\Delta))\frac{1}{\alpha+1}\cdot h^{\alpha}+ \sum_{i=1}^p h^{\beta_i}C_3^{\beta_i}(1+\|\xi\|)^{\beta_i}(1+\Delta)^{\beta_i}\Bigr).\notag
	\end{eqnarray}
	It is easy to see that for all $x\in\R_+\cup\{0\}$, $\varrho\in(0,1]$
	\begin{equation}
	    \label{err_glob_ineq}
	    (1+x)^\varrho\leq  2(1+x).
	\end{equation}
	Hence
	\begin{equation}
	\label{EST_INEQ21_end}
	 \|z_k(t_{k+1})-y_{k+1}\|\leq C_4 (1+\|\xi\|)(1+\Delta) h \left( h^\alpha + \sum\limits_{i=1}^p h^{\beta_i}\right),
	\end{equation}
	where $\displaystyle{C_4=L\max\Bigl\{\frac{1+\tilde{C_1}+C_2}{\alpha+1}, 4 \max_{1\leq j \leq p}C_3^{\beta_j} \Bigr\}}$. From the above  considerations  we see that $C_4$ depends only on $\alpha, \beta_1,\ldots,\beta_p,p,L,K,b-a$. By \eqref{EST_INEQ1} and \eqref{EST_INEQ21_end} we get
    \begin{equation}
        \label{err_glob}
        \|z(t_{k+1})-y_{k+1}\|\leq e^{ hH_+ } \cdot \| z(t_k)-y_k\| +C_4 (1+\|\xi\|)(1+\Delta) h \left(h^\alpha + \sum\limits_{i=1}^p h^{\beta_i}\right).
    \end{equation}
	Let us denote
	\begin{equation}
	e_k=z(t_k)-y_k, \quad k=0,1,\ldots,n.
	\end{equation}
	Of course $\|e_0 \| = \| \xi -y_0 \| \leq \Delta$.  Hence, we arrive at the following recursive inequality  
	\begin{equation}
	\|e_{k+1}\|\leq e^{hH_{+}}\|e_k\| + C_4 (1+\|\xi\|)(1+\Delta) h \left(h^\alpha + \sum\limits_{i=1}^p h^{\beta_i}\right),
	\end{equation}
	for $k=0,1,\ldots,n-1$. Applying the Gronwall's lemma we get when $H_+ > 0$
	\begin{eqnarray}
	    \label{err_disc_gronwall_Hnon0}
	    &&\|e_k\| \leq e^{H_+(b-a)} \Delta + \frac{e^{H_+(b-a)}-1}{e^{hH_+}-1} C_4 (1+\|\xi\|)(1+\Delta) h \left(h^\alpha + \sum\limits_{i=1}^p h^{\beta_i}\right)\notag\\
	    &&\leq e^{H_+(b-a)} \Delta + \frac{e^{H_+(b-a)}-1}{H_+} C_4 (1+\|\xi\|)(1+\Delta) \left(h^\alpha + \sum\limits_{i=1}^p h^{\beta_i}\right)
	\end{eqnarray}
 for $k=0,1,\ldots,n$, and when $H_+=0$
	\begin{equation}
	    \label{err_disc_gronwall_H0}
	    \|e_k\| \leq \Delta +  C_4 (1+\|\xi\|)(1+\Delta)(b-a) \left(h^\alpha + \sum\limits_{i=1}^p h^{\beta_i}\right)
	\end{equation}
	for $k=0,1,\ldots,n$. By elementary calculations we arrive at \eqref{est_euler_err}.
\end{proof}

The following fact is well-known, see, for example, pages 3-4 in \cite{RF1}.

\begin{lem}
\label{aux_lem1}
 For all $\varrho\in (0,1]$ and $x,y\in\R$ it holds
	\begin{equation}
		|x|^{\varrho}\leq 1+|x|,
	\end{equation}
	and
	\begin{equation}
		\Bigl||x|^{\varrho}-|y|^{\varrho}\Bigl|\leq |x-y|^{\varrho}.
	\end{equation}
\end{lem}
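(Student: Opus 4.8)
The plan is to treat the two inequalities separately, reducing both to elementary one-variable estimates on $[0,\infty)$ via monotonicity of $s \mapsto s^\varrho$.

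For the first inequality, I would set $t = |x| \ge 0$ and split into two cases according to whether $t \le 1$ or $t > 1$. When $t \le 1$, monotonicity gives $t^\varrho \le 1^\varrho = 1 \le 1 + t$; when $t > 1$, the restriction $\varrho \le 1$ yields $t^\varrho \le t^1 = t \le 1 + t$. Combining the two cases gives $|x|^\varrho \le 1 + |x|$, and there is nothing delicate here.

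For the second inequality I would first reduce to nonnegative arguments. By the reverse triangle inequality $\bigl||x|-|y|\bigr| \le |x-y|$, and since $s \mapsto s^\varrho$ is nondecreasing on $[0,\infty)$, it suffices to prove $\bigl|a^\varrho - b^\varrho\bigr| \le |a-b|^\varrho$ for all $a,b \ge 0$; applying this with $a=|x|$, $b=|y|$ and then chaining the monotonicity bound $\bigl||x|-|y|\bigr|^\varrho \le |x-y|^\varrho$ delivers the claim. Assuming without loss of generality that $a \ge b \ge 0$ and writing $a = (a-b) + b$, the heart of the matter is the subadditivity $(u+v)^\varrho \le u^\varrho + v^\varrho$ for $u,v \ge 0$, which is precisely the fact recorded in \eqref{inequality_rho}. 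With $u = a-b$ and $v = b$ it gives $a^\varrho \le (a-b)^\varrho + b^\varrho$, that is $a^\varrho - b^\varrho \le (a-b)^\varrho$, which is the desired estimate.

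The only step requiring a genuine (if still routine) argument is the subadditivity itself, so for completeness I would establish it by homogeneity rather than merely citing \eqref{inequality_rho}: the inequality is trivial if $u$ or $v$ vanishes, and otherwise I would set $s=u+v$ together with $p=u/s$, $q=v/s \in (0,1)$, where $p+q=1$. Since $\varrho \le 1$ and $p,q \le 1$ one has $p^\varrho \ge p$ and $q^\varrho \ge q$, hence $p^\varrho + q^\varrho \ge p+q = 1$, and multiplying through by $s^\varrho$ gives $u^\varrho + v^\varrho \ge s^\varrho = (u+v)^\varrho$. I do not anticipate any real obstacle: the whole lemma is a short chain of monotonicity and convexity observations, and the only mild subtlety is the bookkeeping in the reduction to nonnegative arguments through the reverse triangle inequality.
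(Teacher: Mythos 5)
Your proof is correct in every step: the case split $|x|\le 1$ versus $|x|>1$ settles the first inequality, and the chain reverse triangle inequality $\to$ monotonicity of $s\mapsto s^{\varrho}$ $\to$ subadditivity $(u+v)^{\varrho}\le u^{\varrho}+v^{\varrho}$ settles the second, with the normalization argument ($p=u/s$, $q=v/s$, $p^{\varrho}+q^{\varrho}\ge p+q=1$) giving a clean self-contained proof of the subadditivity rather than merely invoking \eqref{inequality_rho}, which the paper itself states without proof. The comparison with the paper is somewhat degenerate: the paper gives no proof of this lemma at all, remarking only that the fact is well-known and citing pages 3--4 of Fiorenza's book \cite{RF1} on H\"older continuous functions. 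So your argument does not diverge from the paper's --- it supplies what the paper delegates to the literature, and it is the standard argument one would find there. What your write-up buys is self-containedness and a transparent identification of the one nontrivial ingredient (subadditivity of $s\mapsto s^{\varrho}$, equivalently concavity plus vanishing at the origin); what the paper's citation buys is brevity, which is reasonable given that the lemma is auxiliary. One small streamlining you could note: the first inequality also follows without a case split from the second ingredient, since $|x|^{\varrho}=\bigl||x|^{\varrho}-0\bigr|\le |x|^{\varrho}\wedge$ is unnecessary --- more directly, $|x|^{\varrho}\le 1+|x|$ follows from $|x|^{\varrho}\le\max\{1,|x|\}\le 1+|x|$, which is exactly your two cases compressed into one line.
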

Below we establish main properties of the functions \eqref{example_metal} and \eqref{example_metal_2}.
\begin{fact}
    \label{metal_assumptions_1}
    Let $A,B,C\geq 0$, $D\in\R$, $\varrho, \gamma\in (0,1]$ and consider $f:[0,+\infty)\times\R\times\R\to\R$ defined as follows\footnote{$\sgn(x)=1$ if $x\geq 0$ and $\sgn(x)=-1$ if $x<0$}
    \begin{displaymath}
        f(t,y,z)=A-B\cdot  \sgn (y)\cdot |y|-C\cdot \sgn(y)\cdot |y|^{\varrho}\cdot |z|^\gamma +D \cdot y \cdot |z|^\gamma.
    \end{displaymath}
    Then the function $f$ satisfies the assumptions (F1)-(F4).
\end{fact}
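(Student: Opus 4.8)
The plan is to exploit two structural simplifications before checking the four conditions. First, $f$ does not depend on $t$, so the temporal term in (F4) is automatically available for any $\alpha\in(0,1]$ and plays no role. Second, since $\sgn(y)|y|=y$ for every $y\in\R$ (including $y=0$), I would rewrite $f(t,y,z)=A-By-C\,\sgn(y)|y|^{\varrho}|z|^{\gamma}+Dy|z|^{\gamma}$, which isolates the only genuinely nonsmooth ingredient, namely the scalar function $p(y):=\sgn(y)|y|^{\varrho}$. Conditions (F1) and (F2) are then routine: continuity of $f$ reduces to continuity of $p$ at the origin (both one-sided limits vanish and $p(0)=0$), while the growth bound follows termwise from $|y|^{\varrho}\le 1+|y|$ and $|z|^{\gamma}\le 1+|z|$ (Lemma~\ref{aux_lem1}), giving $|f|\le(A+B+C+|D|)(1+|y|)(1+|z|)$, i.e.\ (F2) with $K=A+B+C+|D|$.

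For (F3) I would use that here $d=1$, so the pairing is scalar multiplication. Expanding $f(t,y_1,z)-f(t,y_2,z)$ the constant $A$ cancels and, after multiplying by $(y_1-y_2)$, three terms remain: $-B(y_1-y_2)^2\le0$; a term $-C|z|^{\gamma}(y_1-y_2)\bigl(p(y_1)-p(y_2)\bigr)$; and $D|z|^{\gamma}(y_1-y_2)^2$. The crux is that $p$ is nondecreasing on $\R$ (increasing on each half-line and crossing continuously through $0$), so $(y_1-y_2)\bigl(p(y_1)-p(y_2)\bigr)\ge0$ and the middle term is $\le0$ and can be discarded. Bounding $D|z|^{\gamma}\le D_+(1+|z|)$ then yields (F3) with $H=\max\{0,D\}$.

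For (F4) I would telescope $f(y_1,z_1)-f(y_2,z_2)=\bigl[f(y_1,z_1)-f(y_2,z_1)\bigr]+\bigl[f(y_2,z_1)-f(y_2,z_2)\bigr]$, estimating the $y$-increment and the $z$-increment separately. The $z$-increment factors as $\bigl(|z_1|^{\gamma}-|z_2|^{\gamma}\bigr)\bigl(-C\,p(y_2)+Dy_2\bigr)$; using $\bigl||z_1|^{\gamma}-|z_2|^{\gamma}\bigr|\le|z_1-z_2|^{\gamma}$ (Lemma~\ref{aux_lem1}) and $|p(y_2)|\le 1+|y_2|$ it is bounded by $(C+|D|)(1+|y_1|+|y_2|)|z_1-z_2|^{\gamma}$, matching the last term of (F4). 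In the $y$-increment, the $B$- and $D$-contributions are linear in $|y_1-y_2|$ (exponent $\beta_2=1$) and carry a factor $\le(1+|z_1|+|z_2|)$, while the $C$-contribution needs a Hölder estimate for $p$.

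The main obstacle I anticipate is precisely this Hölder bound $|p(y_1)-p(y_2)|\le 2|y_1-y_2|^{\varrho}$, and in particular the opposite-sign case $y_1>0>y_2$, where $|p(y_1)-p(y_2)|=y_1^{\varrho}+|y_2|^{\varrho}$ while $|y_1-y_2|=y_1+|y_2|$; there the clean subadditive inequality from Lemma~\ref{aux_lem1} does not apply directly, and I would instead use monotonicity of $x\mapsto x^{\varrho}$ to get $y_1^{\varrho}+|y_2|^{\varrho}\le 2(y_1+|y_2|)^{\varrho}$. Combining this with the same-sign case (where Lemma~\ref{aux_lem1} gives the sharper factor $1$) fixes $\beta_1=\varrho$, and collecting constants gives (F4) with $L=\max\{B+|D|,\,2C,\,C+|D|\}$ and any $\alpha\in(0,1]$.
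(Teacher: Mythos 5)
Your proposal is correct and takes essentially the same route as the paper's proof: you isolate the same nonsmooth ingredient $p(y)=\sgn(y)|y|^{\varrho}$ (the paper's $h_2=-p$), invoke the same Lemma~\ref{aux_lem1} for (F2) and the $z$-increment in (F4), use the same monotonicity argument for (F3), prove the same two-case H\"older bound $|p(y_1)-p(y_2)|\le 2|y_1-y_2|^{\varrho}$ with the identical opposite-sign trick, and arrive at the same constants $K=A+B+C+|D|$ and $L=\max\{B+|D|,\,2C,\,C+|D|\}$. The only deviations are cosmetic: you simplify $\sgn(y)|y|=y$ at the outset and obtain the slightly sharper $H=\max\{0,D\}$ where the paper settles for $|D|$.
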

\begin{proof}
Let us define $h_1(y)=-\sgn(y)\cdot |y|=-y$, $h_2(y)=-\sgn(y)\cdot |y|^{\varrho}$ 
    for all $y\in\R$. Then we can write that
    \begin{equation}
	    f(t,y,z)=A+B\cdot h_1(y)+C\cdot h_2(y)\cdot |z|^\gamma + D \cdot y \cdot |z|^\gamma.
    \end{equation}
    Of course $h_1 \in C(\R)$. Moreover,
    \begin{equation}
        \lim\limits_{y\to 0-}h_2(y)=\lim\limits_{y\to 0-}(-y)^{\varrho}=0=h_2(0)=\lim\limits_{y\to 0+}h_2(y)=\lim\limits_{y\to 0+}(-1)\cdot y^{\varrho},
    \end{equation}
    therefore  $h_2 \in C(\R)$. In particular, this implies that  $f \in C([0, \infty) \times \R \times \R)$, so $f$ satisfies (F1).

By Lemma \ref{aux_lem1} we have $|h_1(y)| =  |y| \leq 1+|y|$, $|h_2(y)|  = |y|^{\varrho} \leq 1+|y|$  for all $y\in\R$.
    Hence, for $(t, y, z) \in [0,+\infty)\times\R\times\R$
    \begin{eqnarray*}
	    |f(t,y,z)| &\leq& A+B\cdot |h_1(y)|+C\cdot |h_2(y)|\cdot |z|^\gamma + |D||y||z|^\gamma \\
	    &\leq& A+B (1+|y|)+C (1+|y|)(1+|z|) + |D|(1+|y|)(1+|z|) \\
	    &\leq& (A+B+C+|D|)(1+|y|)(1+|z|),
    \end{eqnarray*}
and therefore $f$ satisfies (F2).    

For all $t\geq 0$,$z,y_1,y_2\in\mathbb{R}$
    \begin{eqnarray*}
        &&(y_1-y_2)\Bigl(f(t,y_1,z)-f(t,y_2,z)\Bigr) \\
        &=& B (y_1-y_2) \Bigl(h_1(y_1)-h_1(y_2)\Bigr) +C\cdot |z|^\gamma (y_1-y_2)\Bigl(h_2(y_1)-h_2(y_2)\Bigr) + D \cdot |z|^\gamma (y_1-y_2)^2
    \end{eqnarray*}
    Since $h_1, h_2$ are decreasing, it holds for all $y_1, y_2 \in\R$, $i=1,2$ that $\displaystyle{(y_1-y_2) (h_i(y_1)-h_i(y_2)) \leq 0}$. Moreover $B, C, |z|^\gamma \geq 0$, hence
    \begin{displaymath}
        (y_1-y_2)\Bigl(f(t,y_1,z)-f(t,y_2,z)\Bigr) \leq D \cdot |z|^\gamma (y_1-y_2)^2 \leq |D| \cdot (1+|z|) (y_1-y_2)^2,
    \end{displaymath}
    and $f$ satisfies (F3).

For all $y_1, y_2 \in\R$ we have that $|h_1(y_1)-h_1(y_2)| = |y_1 - y_2|$.  We now justify that  $h_2$ satisfies for all $y_1, y_2 \in\R$
    \begin{equation}
        |h_2(y_1)-h_2(y_2)| \leq 2|y_1 - y_2|^\varrho.
    \end{equation}
    When $y_1, y_2 < 0$ or $y_1, y_2 \geq 0$, by Lemma~\ref{aux_lem1}, we have 
    \begin{displaymath}
        |h_2(y_1)-h_2(y_2)| =  \big| |y_1|^{\varrho} - |y_2|^{\varrho} \big| \leq |y_1 - y_2|^{\varrho} \leq 2 |y_1 - y_2|^{\varrho}.
    \end{displaymath}
    For the case when $y_1<0$, $y_2\geq 0$ (the case $y_1\geq 0$, $y_2< 0$ is analogous) we have
    \begin{displaymath}
        |h_2(y_1)-h_2(y_2)| = 
        ||y_1|^{\varrho}+y_2^{\varrho}|=|y_1|^{\varrho}+y_2^{\varrho}\leq 2|y_1-y_2|^{\varrho},
    \end{displaymath}
     since $-y_1>0$,  $|y_1-y_2|=y_2+(-y_1)\geq y_2\geq 0$, $|y_1-y_2|=y_2+(-y_1)\geq -y_1=|y_1|\geq 0$,
    and $[0,+\infty)\ni x\to x^{\varrho}$ is increasing. Combining the facts above we obtain for all $t_1,t_2\geq 0$, $y_1,y_2,z_1,z_2\in\mathbb{R}$
    \begin{eqnarray*}	
	    &&|f(t_1,y_1,z_1)-f(t_2,y_2,z_2)| \\
	    &\leq& B|h_1(y_1)-h_1(y_2)|+C\Bigl||z_1|^\gamma\cdot h_2(y_1)-|z_2|^\gamma\cdot h_2(y_2)\Bigr| + |D|\cdot \Bigl|y_1 |z_1|^\gamma - y_2 |z_2|^\gamma\Bigr|\\
	    &\leq& B|y_1-y_2|+C\cdot |z_1|^\gamma\cdot |h_2(y_1)-h_2(y_2)|+C\cdot |h_2(y_2)|\cdot \Bigl||z_1|^\gamma-|z_2|^\gamma\Bigl| \\
	    &+& |D|\cdot |y_2| \cdot\Bigl| |z_1|^\gamma - |z_2|^\gamma\Bigr| + |D|\cdot |z_1|^\gamma \cdot |y_1 - y_2| \\
	    &\leq& B|y_1-y_2|+2 C (1+|z_1|) |y_1 - y_2|^\varrho +C(1+|y_2|)\cdot |z_1-z_2|^\gamma \\
	    &+& |D|\cdot |y_2|\cdot |z_1 - z_2|^\gamma + |D|\cdot (1+|z_1|)\cdot |y_1 - y_2| \\
	    &\leq& \max\{B+|D|, 2C, C+|D|\} \Big[ (1+|z_1|+|z_2|)(1+|y_1|+|y_2|)|t_1-t_2| \\ 
	    &+& (1+|z_1|+|z_2|)\cdot |y_1-y_2|+(1+|z_1|+|z_2|)\cdot |y_1-y_2|^{\varrho} + (1+|y_1|+|y_2|)\cdot |z_1-z_2|^\gamma \Big].
    \end{eqnarray*}
That ends the proof.
\end{proof}

The proof of the fact below is analogous to the proof of Fact~\ref{metal_assumptions_1} and is omitted.
\begin{fact}
    \label{metal_assumptions_2}
    Let $A,B,C\geq 0$, $D\in\R$, $\varrho, \gamma\in (0,1]$ and define a function $f:[0,+\infty)\times\R\times\R\to\R$ as follows
    \begin{displaymath}
        f(t,y,z)=A-B\cdot  \sgn (y)\cdot |y|-C\cdot \sgn(y)\cdot |y|^{\varrho}\cdot |z| +D \cdot y \cdot z.
    \end{displaymath}
    Then the function $f$ satisfies the assumptions (F1)-(F4).
\end{fact}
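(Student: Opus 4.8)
The plan is to follow the template of the proof of Fact~\ref{metal_assumptions_1}, exploiting the decomposition
\begin{equation*}
    f(t,y,z)=A+B\cdot h_1(y)+C\cdot h_2(y)\cdot|z|+D\cdot y\cdot z,
\end{equation*}
where $h_1(y)=-\sgn(y)|y|=-y$ and $h_2(y)=-\sgn(y)|y|^{\varrho}$. Since $h_1,h_2\in C(\R)$ (continuity of $h_2$ at the origin is checked exactly as in Fact~\ref{metal_assumptions_1}) and both $z\mapsto|z|$ and $z\mapsto z$ are continuous, assumption (F1) follows at once. For (F2) I would use $|h_1(y)|\leq 1+|y|$ and $|h_2(y)|=|y|^{\varrho}\leq 1+|y|$ from Lemma~\ref{aux_lem1}, bounding each summand by a multiple of $(1+|y|)(1+|z|)$ to obtain $|f(t,y,z)|\leq(A+B+C+|D|)(1+|y|)(1+|z|)$.

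For (F3) I would compute
\begin{align*}
    (y_1-y_2)\bigl(f(t,y_1,z)-f(t,y_2,z)\bigr)&=B(y_1-y_2)\bigl(h_1(y_1)-h_1(y_2)\bigr)\\
    &\quad+C|z|(y_1-y_2)\bigl(h_2(y_1)-h_2(y_2)\bigr)+Dz(y_1-y_2)^2,
\end{align*}
and, since $h_1$ and $h_2$ are decreasing while $B,C,|z|\geq 0$, the first two terms are nonpositive; the last one is bounded by $|D|(1+|z|)(y_1-y_2)^2$, so (F3) holds with $H=|D|$. The key structural difference from Fact~\ref{metal_assumptions_1} is that here the last term carries $z$ (not $|z|^{\gamma}$), but because it contributes only through $(y_1-y_2)^2$ this causes no difficulty.

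The main work, and the only genuinely delicate step, is (F4). Because $f$ does not depend on $t$, the time-increment term is vacuous, so it suffices to control the increments in $y$ and $z$. I would telescope the middle term,
\begin{equation*}
    h_2(y_1)|z_1|-h_2(y_2)|z_2|=|z_1|\bigl(h_2(y_1)-h_2(y_2)\bigr)+h_2(y_2)\bigl(|z_1|-|z_2|\bigr),
\end{equation*}
and the bilinear term via $y_1z_1-y_2z_2=z_1(y_1-y_2)+y_2(z_1-z_2)$. Invoking the H\"older estimate $|h_2(y_1)-h_2(y_2)|\leq 2|y_1-y_2|^{\varrho}$ established inside Fact~\ref{metal_assumptions_1}, together with $\bigl||z_1|-|z_2|\bigr|\leq|z_1-z_2|$ and $|h_2(y_2)|\leq 1+|y_2|$, and then grouping, the $y$-increments split into a Lipschitz part $(B+|D|)(1+|z_1|)|y_1-y_2|$ and a H\"older part $2C(1+|z_1|)|y_1-y_2|^{\varrho}$, while the $z$-increment collects into $(C+|D|)(1+|y_2|)|z_1-z_2|$. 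This reproduces the form of (F4) with exponents $\beta_1=1$, $\beta_2=\varrho$, $\gamma=1$, $\alpha$ arbitrary in $(0,1]$, and constant $L=\max\{B+|D|,2C,C+|D|\}$. I expect no conceptual obstacle beyond the bookkeeping in the telescoping, which is entirely parallel to Fact~\ref{metal_assumptions_1}; the only thing worth flagging is that, unlike there, the outer exponent on $|z_1-z_2|$ is $1$ rather than $\gamma$, since the $z$-dependence of $f$ is now linear.
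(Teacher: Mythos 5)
Your proposal is correct and follows exactly the route the paper intends: the paper omits the proof of Fact~\ref{metal_assumptions_2}, stating only that it is analogous to that of Fact~\ref{metal_assumptions_1}, and your argument is precisely that analogue (same decomposition via $h_1,h_2$, same use of Lemma~\ref{aux_lem1} and the bound $|h_2(y_1)-h_2(y_2)|\leq 2|y_1-y_2|^{\varrho}$, with the telescoping adjusted for the linear $z$-dependence so that (F4) holds with $\beta_1=1$, $\beta_2=\varrho$, $\gamma=1$). No gaps.
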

The proof of the following fact is straightforward.
\begin{fact}
\label{fakt_h}
For all $h \in (0,\frac{1}{2})$ it holds
\begin{displaymath}
 0 < \frac{1}{1-h} \leq 1 + 2h \leq 2.
\end{displaymath}
\end{fact}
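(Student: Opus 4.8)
The plan is to verify the three inequalities in the displayed chain separately, working from the outside in and reducing the only nontrivial one to a sign condition on a quadratic.

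First I would dispatch the two easy bounds. For the leftmost inequality, since $h \in (0,\tfrac12)$ we have in particular $h < 1$, hence $1 - h > 0$ and therefore $\tfrac{1}{1-h} > 0$. For the rightmost inequality, the hypothesis $h < \tfrac12$ gives $2h < 1$, so $1 + 2h < 2$, which a fortiori yields $1 + 2h \leq 2$.

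The substantive step is the middle inequality $\tfrac{1}{1-h} \leq 1 + 2h$. Since we have already observed $1 - h > 0$, I would multiply both sides by the positive quantity $1-h$ (preserving the inequality) to obtain the equivalent claim
\[
1 \leq (1+2h)(1-h) = 1 + h - 2h^2,
\]
i.e.\ $0 \leq h - 2h^2 = h(1-2h)$. Both factors are positive on the range $h \in (0,\tfrac12)$: clearly $h > 0$, and $1 - 2h > 0$ precisely because $h < \tfrac12$. Hence $h(1-2h) > 0$, which establishes the middle inequality and completes the chain.

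There is no real obstacle here; the only point requiring any care is that the reduction of the middle inequality to $0 \le h(1-2h)$ relies on $1-h$ being \emph{positive}, which is exactly why the argument must first record $h < 1$ before clearing the denominator. Since the whole statement is an elementary algebraic manipulation valid termwise on the interval $(0,\tfrac12)$, I expect the write-up to be a few lines at most.
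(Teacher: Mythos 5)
Your proof is correct: the paper itself offers no argument for Fact \ref{fakt_h}, stating only that ``the proof of the following fact is straightforward,'' and your verification --- clearing the positive denominator $1-h$ to reduce the middle inequality to $0 \leq h(1-2h)$, plus the two trivial endpoint bounds --- is exactly the elementary computation the authors have in mind.
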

 {\noindent\bf Statements and Declarations}
\newline
The authors declare that they have no conflict of interest

The datasets generated during and analysed during the current study are available from the corresponding author on reasonable request. 
\vspace{12pt} \newline
 {\noindent\bf Acknowledgments}
\newline
This research was partly supported by the National Science Centre, Poland, under project 2017/25/B/ST1/00945.

We would like to thank two anonymous referees for their valuable comments and suggestions that helped to improve the presentation of the results and quality of this paper.

\end{document}